\documentclass[a4paper, 12pt, dvipdfmx]{amsart}

\usepackage{amssymb, amsmath, amsthm}

\theoremstyle{plain}
\newtheorem{theorem}{Theorem}[section]

\newtheorem{lemma}[theorem]{Lemma}
\newtheorem{conjecture}[theorem]{Conjecture}
\newtheorem{corollary}[theorem]{Corollary}
\newtheorem{proposition}[theorem]{Proposition}
\newtheorem{definition}[theorem]{Definition}
\newtheorem{remark}[theorem]{Remark}

\makeatletter

\@addtoreset{equation}{section}
\makeatother

\title[Minimax values of $p$-energy and packing radii]{
Certain min-max values related to the $p$-energy and packing radii of Riemannian manifolds and metric measure spaces}
\author{Ayato Mitsuishi}
\email[A.~Mitsuishi]{mitsuishi@fukuoka-u.ac.jp}
\date{\today}

\newcommand{\Lip}{\mathrm{Lip}}
\newcommand{\lip}{\mathrm{lip}}
\newcommand{\pack}{\mathrm{pack}}

\newcommand{\inrad}{\mathrm{inrad}}
\newcommand{\inpack}{\mathrm{inpack}}

\def\Xint#1{\mathchoice
{\XXint\displaystyle\textstyle{#1}}%
{\XXint\textstyle\scriptstyle{#1}}%
{\XXint\scriptstyle\scriptscriptstyle{#1}}%
{\XXint\scriptscriptstyle\scriptscriptstyle{#1}}%
\!\int}
\def\XXint#1#2#3{{\setbox0=\hbox{$#1{#2#3}{\int}$}
\vcenter{\hbox{$#2#3$}}\kern-.5\wd0}}

\def\dashint{\Xint-}

\begin{document}
\maketitle

\begin{abstract}
Grosjean proved that the $(1/p)$-th power of the first eigenvalue of the $p$-Laplacian on a closed Riemannian manifold converges to the twice of the inverse of the diameter of the space, as $p \to \infty$. 
Before this, a corresponding result for the Dirichlet first eigenvalues was also obtained by Juutinen, Lindqvist and Manfredi. 
We extend those results for certain $k$-th min-max value related to the $p$-energy, 
where the corresponding limits are packing radii introduced by Grove-Markvorsen or its variant. 
Furthermore, we remark that our result holds for more singular setting. 
\end{abstract}

\section{Introduction and a main result} \label{sec:intro}

\subsection{A main result}
Let $p > 1$. 
The $p$-Laplacian on a closed Riemannian manifold $M$ is defined by 
\[
\triangle_p u = - \mathrm{div} (|\nabla u|^{p-2} \nabla u)
\]
for smooth functions $u : M \to \mathbb R$. 
The operator $\triangle_p$ is well-defined on the set $W^{1,p}(M)$ of all $(1,p)$-Sobolev functions, and is non-linear if $p \ne 2$. 
We say that $\lambda \ge 0$ is an {\it eigenvalue} of $\triangle_p$ if 
\[
\triangle_p u = \lambda |u|^{p-2} u
\]
holds for some non-trivial function $u \in W^{1,p}(M)$ in the weak sense.
Such a $u$ is called an eigenfunction of $\triangle_p$ for $\lambda$. 
The first non-zero eigenvalue of $\triangle_p$ is denoted by $\lambda_{1,p}(M)$. 
About this value, Grosjean proved
\begin{theorem}[\cite{Gros}] \label{thm:Gros}
If $M$ is a closed Riemannian manifold, then 
\[
\lim_{p \to \infty} \lambda_{1,p}(M)^{1/p} = \frac{2}{\mathrm{diam}(M)}.
\]
Here, $\mathrm{diam}(M)$ stands for the diameter of $M$, that is, $\max_{x,y \in M} |x,y|$.
\end{theorem}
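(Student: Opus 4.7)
The plan is to use the variational characterization of the first nonzero eigenvalue,
\[
\lambda_{1,p}(M) \;=\; \inf_{u\,\not\equiv\,\mathrm{const}}\ \frac{\int_M |\nabla u|^p\, dV}{\displaystyle\min_{c\in\mathbb R}\int_M |u-c|^p\, dV},
\]
and to track the behaviour of each factor under $p$-th roots as $p\to\infty$. Heuristically, $\bigl(\int|\nabla u|^p\bigr)^{1/p}\to \mathrm{Lip}(u)$ and $\min_c \bigl(\int|u-c|^p\bigr)^{1/p} \to \min_c \|u-c\|_\infty = (\max u - \min u)/2$, so the problem formally becomes
\[
\inf_{u} \frac{2\,\mathrm{Lip}(u)}{\max u - \min u},
\]
which equals $2/\mathrm{diam}(M)$: the inequality $\max u - \min u \le \mathrm{Lip}(u)\cdot\mathrm{diam}(M)$ applied to extremal points gives the lower bound, and equality is realised by a distance function $u(x) = d(x,x_0)$ when $x_0$ is one endpoint of a diameter pair.

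For the upper bound $\limsup \lambda_{1,p}(M)^{1/p} \le 2/\mathrm{diam}(M)$, I would pick $x_0, x_1$ realising the diameter $D$ and test against $u(x) = d(x, x_0)$, which is $1$-Lipschitz. The numerator satisfies $\|\nabla u\|_p \le \mathrm{Vol}(M)^{1/p}\to 1$. For the denominator, restricting to the small balls $B(x_0,\varepsilon)$ and $B(x_1,\varepsilon)$, on at least one of which $|u-c|\ge D/2-\varepsilon$ pointwise (no matter what $c$ is), I get $\min_c\|u-c\|_p \ge (D/2-\varepsilon)\cdot\min_{i}\mathrm{Vol}(B(x_i,\varepsilon))^{1/p}$, whose right-hand side tends to $D/2-\varepsilon$. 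Letting $\varepsilon\to 0$ closes the upper bound.

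For the lower bound $\liminf \lambda_{1,p}(M)^{1/p} \ge 2/\mathrm{diam}(M)$, I would take an eigenfunction $u_p$ normalised by $\|u_p\|_\infty = 1$ and set $M_p^+ = \max u_p$, $M_p^- = -\min u_p$, both positive because the weak orthogonality to constants forces a sign change. Two things must be established: (i) $M_p^\pm \to 1$, obtained by dissecting the orthogonality identity $\int_{u_p>0} u_p^{p-1}\, dV = \int_{u_p<0}|u_p|^{p-1}\, dV$ and taking $(p-1)$-th roots, since in that limit only the maxima survive exponentially and any sub-exponential volume factor is absorbed; (ii) a quantitative Lipschitz-type bound $M_p^+ + M_p^- \le \mathrm{Lip}(u_p)\cdot D$ coupled with $\mathrm{Lip}(u_p)^{1/1}\lesssim \lambda_{1,p}^{1/p}\cdot(1+o(1))$ along the sequence. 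The cleanest way to assemble these is to extract a subsequence $u_{p_k}$ converging uniformly to a limit $u_\infty$ which is Lipschitz on $M$ (using $p$-dependent gradient estimates for $p$-eigenfunctions); the limit $u_\infty$ then satisfies $\max u_\infty = -\min u_\infty = 1$ and $\mathrm{Lip}(u_\infty)\le \lim \lambda_{1,p}^{1/p}$, and the diameter inequality applied at a pair of extremal points of $u_\infty$ yields $2 \le \lim \lambda_{1,p}^{1/p}\cdot D$.

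The main obstacle is this last step of the lower bound: obtaining $p$-independent (or at least controlled) regularity of $p$-eigenfunctions, enough to extract a Lipschitz limit. Classical interior regularity for $p$-harmonic functions degrades as $p$ grows, so one either needs sharp eigenfunction estimates (of Di Benedetto/Tolksdorf type with explicit $p$-dependence) or a purely variational argument that avoids pointwise control by working only with integral quantities, using the orthogonality condition to enforce the balancing $M_p^+ \approx M_p^-$ directly inside the Rayleigh quotient before passing to the limit.
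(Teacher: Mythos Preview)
Your overall strategy is sound, but the ``main obstacle'' you flag is not an obstacle at all: you do not need any $p$-harmonic regularity theory (Di Benedetto, Tolksdorf, etc.) here. All that is required is the Sobolev--Morrey embedding $W^{1,p}(M)\hookrightarrow C^{0,1-n/p}(M)$ for $p>n$, whose constant stays uniformly bounded as $p\to\infty$. Once the upper bound is established, the eigenfunction $u_p$ (with either of the normalisations $\|u_p\|_\infty=1$ or $\|u_p\|_p=1$) has $\|\nabla u_p\|_p$ bounded, so Morrey gives a uniform H\"older family; Arzel\`a--Ascoli then produces the Lipschitz limit $u_\infty$ with $\|u_\infty\|_\infty=1$ and $\mathrm{Lip}(u_\infty)\le\liminf_p\lambda_{1,p}^{1/p}$ via weak $W^{1,q}$-convergence for each fixed $q$. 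The sign-balancing $\max u_\infty=-\min u_\infty=1$ follows from the orthogonality $\int_M|u_p|^{p-2}u_p=0$ exactly as you outline, using the uniform H\"older bound once more to get a ball of fixed volume on which $u_p$ is close to its maximum.

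The paper itself does not prove this theorem directly (it is cited from Grosjean), but its proof of the generalisation $\lim_{p\to\infty}\overline\lambda_{k,p}^{1/p}=\mathrm{pack}_{k+1}^{-1}$, specialised to $k=1$ and combined with the known identity $\lambda_{1,p}=\overline\lambda_{1,p}$, gives an alternative argument. That route replaces your single Neumann eigenfunction by a pair of Dirichlet near-minimisers $u_{0,p},u_{1,p}$ on disjoint open sets $A_{0,p},A_{1,p}$. The compactness step is identical to yours (Morrey plus Arzel\`a--Ascoli), but because the limits $u_0,u_1$ automatically have disjoint supports, no sign-balancing is needed: one simply invokes $\mathrm{inrad}(\{u_i\ne0\})^{-1}\le\mathrm{Lip}(u_i)$ together with the elementary packing bound $\min_i\mathrm{inrad}(A_i)\le\mathrm{pack}_2(M)=\tfrac12\mathrm{diam}(M)$. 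Your approach is more direct for $k=1$; the paper's Dirichlet decomposition is what makes the extension to arbitrary $k$ transparent.
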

This is generalized to more singular metric measure spaces (\cite{Ho}, \cite{AH}). 
Before this result, the Dirichlet first eigenvalue case was proved by Juutinen, Lindqvist and Manfredi (\cite{JLM}, see also Lemma \ref{lem:JLM}).

In this paper, we consider variants of the diameter and the first eigenvalue $\lambda_{1,p}$ of the $p$-Laplacian as follows. 
Replacements of the diameter are the {\it packing radii} introduced by Grove-Markvorsen (\cite{GM1}, \cite{GM2}): 
\[
\mathrm{pack}_{k+1} (M) := \frac{1}{2} \max_{x_0, x_1, \dots, x_k \in M} \min_{i \ne j} |x_i, x_j|
\]
for $k \ge 1$. 
It is the largest $r > 0$ such that $M$ can contain disjoint $k+1$ open balls of radius $r$. 
Note that $\mathrm{pack}_2 = \frac{1}{2} \mathrm{diam}$. 
The sequence $\{\mathrm{pack}_{k+1}\}_k$ is non-increasing in $k$ and goes to zero as $k \to \infty$. 
As a replacement of $\lambda_{1,p}$, 
we introduce 
a kind of min-max value defined as 
\begin{equation} \label{eq:over,k,p}
\overline \lambda_{k,p}(M) :=
\inf_{A_0, A_1, \dots, A_k : \text{ disjoint}} \max_{i} \lambda_{1,p}^D(A_i). 
\end{equation}
Here, $A_i$ denote mutually disjoint non-empty open subsets in $M$, and 
\begin{equation} \label{eq:1st-D}
\lambda_{1,p}^D(A) := \inf \left\{ \frac{\|\nabla f\|_p^p}{\|f\|_p^p} \,\middle|\, f \in W^{1,p}_0(A) \setminus \{0\}
\right\}
\end{equation}
is {\it the first Dirichlet eigenvalue of the $p$-Laplacian} on 
a non-empty proper open subset 
$A \subset M$. 
Values similar to $\overline \lambda_{k,p}$ using separation of the space are studied in \cite{Mic}, \cite{CL}. 

A main result in the paper is: 
\begin{theorem} \label{thm:1}
If $M$ is a closed Riemannian manifold, then we have 
\[
\lim_{p \to \infty} \overline \lambda_{k,p}(M)^{1/p} = \mathrm{pack}_{k+1}(M)^{-1}.
\]
\end{theorem}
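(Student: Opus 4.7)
The plan is to prove the matching inequalities $\limsup_{p \to \infty} \overline{\lambda}_{k,p}(M)^{1/p} \leq \pack_{k+1}(M)^{-1} \leq \liminf_{p \to \infty} \overline{\lambda}_{k,p}(M)^{1/p}$, reducing to the Juutinen--Lindqvist--Manfredi asymptotic (Lemma~\ref{lem:JLM}), $\lambda_{1,p}^D(\Omega)^{1/p} \to 1/\inrad(\Omega)$, combined with a geometric identity matching the packing radius to the minimal inradius over partitioning open sets.

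For the upper bound, I pick points $x_0, \ldots, x_k \in M$ realizing $r := \pack_{k+1}(M)$, i.e., with $|x_i, x_j| \geq 2r$ for $i \neq j$. For each small $\epsilon > 0$, the geodesic balls $A_i := B(x_i, r - \epsilon)$ form an admissible pairwise disjoint family satisfying $\inrad(A_i) \geq r - \epsilon$. Applying Lemma~\ref{lem:JLM} to each $A_i$ individually gives $\lim_p \lambda_{1,p}^D(A_i)^{1/p} = 1/\inrad(A_i) \leq 1/(r-\epsilon)$, hence $\limsup_p \overline{\lambda}_{k,p}^{1/p} \leq 1/(r-\epsilon)$, and sending $\epsilon \searrow 0$ settles this direction.

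For the lower bound the crux is the geometric fact that, for any pairwise disjoint nonempty open sets $A_0, \ldots, A_k \subset M$, one has
\[
\min_{i} \inrad(A_i) \leq \pack_{k+1}(M).
\]
Indeed, if every $\inrad(A_i) > R$, then each $A_i$ contains an open ball $B(x_i, R)$; since $M$ is a length space, disjointness of these balls forces $|x_i, x_j| \geq 2R$ (otherwise a midpoint of a shortest geodesic from $x_i$ to $x_j$ lies in both balls), so $\pack_{k+1}(M) \geq R$. Combining this with a uniform version of Lemma~\ref{lem:JLM}, namely that for each $\delta \in (0, 1)$ there exists $p_0 = p_0(M, \delta)$ with $\lambda_{1,p}^D(\Omega)^{1/p} \geq (1-\delta)/\inrad(\Omega)$ uniformly over nonempty proper open $\Omega \subset M$ and $p \geq p_0$, I select in any competitor family $(A_i)$ the index realizing the minimum inradius to obtain
\[
\max_i \lambda_{1,p}^D(A_i)^{1/p} \geq \frac{1-\delta}{\min_i \inrad(A_i)} \geq \frac{1-\delta}{\pack_{k+1}(M)};
\]
taking the infimum over competitors, then $\liminf_{p}$ and $\delta \searrow 0$, gives the desired lower bound.

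The main obstacle is thus securing the uniformity in $\Omega$ of the lower bound in Lemma~\ref{lem:JLM}, since the classical statement is pointwise. I expect it to follow from the proof of that lemma itself, which typically rests on a Morrey/Hardy-type pointwise inequality of the shape $|f(x)| \leq C(M, n, p)\, d(x, \partial \Omega)^{1 - n/p} \|\nabla f\|_p$ for $f \in W_0^{1,p}(\Omega)$, where $C(M, n, p)$ has no $\Omega$-dependence and satisfies $C(M, n, p)^{1/p} \to 1$; integrating over $\Omega$ converts this into $\|f\|_p \leq (1 + o(1))\, \inrad(\Omega)\, \|\nabla f\|_p$ uniformly in $\Omega$. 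A robust fallback, well-adapted to the singular extensions mentioned in the abstract, is a contradiction/compactness argument: take near-eigenfunctions normalized by $\|u_i^{(p)}\|_\infty = 1$ along a $\delta$-violating sequence, extract uniform limits $u_i \in \mathrm{Lip}(M)$ via Arzel\`a--Ascoli, and apply the geometric identity to the supports $\Omega_i := \{u_i > 0\}$, using $\inrad(\Omega_i) \geq 1/\mathrm{Lip}(u_i)$ to reach a contradiction with $\max_i \mathrm{Lip}(u_i) < 1/\pack_{k+1}(M)$.
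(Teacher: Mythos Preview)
Your plan is sound, and your ``robust fallback'' is in fact the paper's actual proof of the $\liminf$ inequality: one takes, for each $p$, near-optimal disjoint families $(A_{i,p})_i$ with near-minimizers $u_{i,p}\in W_0^{1,p}(A_{i,p})$ normalized in $L^p$, invokes the uniform Morrey estimate \eqref{eq:Morrey}--\eqref{eq:uniform Holder} to get equicontinuity, extracts uniform limits $u_i$ via Arzel\`a--Ascoli, checks $u_iu_j\equiv 0$ so that the open sets $A_i:=\{u_i\neq 0\}$ are disjoint and non-empty, and then applies Lemma~\ref{lem:open-Lip} together with Lemma~\ref{lem:inrad vs pack} (your geometric fact) to conclude. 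Your $\limsup$ argument via shrunk balls and Lemma~\ref{lem:JLM} is a harmless variant of the paper's direct computation with the tent functions $u_i=\max\{r-|x_i,\cdot|,0\}$ on the full balls $U_r(x_i)$.

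Your preferred route (a), however, does not go through as sketched. From the pointwise Morrey bound $|f(x)|\le C\,d(x,\partial\Omega)^{1-n/p}\|\nabla f\|_p$ one obtains, after taking $L^p$-norms, only
\[
\|f\|_p \;\le\; C\,m(\Omega)^{1/p}\,\inrad(\Omega)^{1-n/p}\,\|\nabla f\|_p,
\]
hence $\lambda_{1,p}^D(\Omega)^{1/p}\ge C^{-1}m(\Omega)^{-1/p}\,\inrad(\Omega)^{-1+n/p}$. This misses the target $(1-\delta)\,\inrad(\Omega)^{-1}$ by the factor $\inrad(\Omega)^{n/p}$, which for fixed $p$ is \emph{not} uniformly close to $1$: competitors $\Omega$ with $m(\Omega)$ bounded away from zero but $\inrad(\Omega)$ arbitrarily small are admissible (e.g.\ delete a fine codimension-one net from $M$). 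So the proof of Lemma~\ref{lem:JLM} does not upgrade to a domain-uniform lower bound by this route, and since the competing families $A_{i,p}$ genuinely vary with $p$ one cannot simply quote the lemma on a fixed domain. The compactness extraction is the mechanism here, not merely a fallback.
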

It is known that $\overline \lambda_{1,p}$ coincides with the first eigenvalue $\lambda_{1,p}$ of the $p$-Laplacian (\cite{Veron}, \cite[Lemmas 3.1 and 3.2]{Matei}, \cite[Lemma 1.9.5]{AH}).
So, our Theorem \ref{thm:1} is regarded as a ``$k$-th version'' of Theorem \ref{thm:Gros}. 
A result similar to Theorem \ref{thm:1} was obtained for the Dirichlet eigenvalues on bounded domains of Euclidean spaces (\cite[Theorem 4.1]{JL}) for $k=2$. 
The author do not know whether $\overline \lambda_{k,p}$ is an eigenvalue of $\triangle_p$ or not, even if $p=2$. 
So, we often call the sequence $\{\overline \lambda_{k,p}\}_k$ a {\it fake spectrum}.
For another value similar to $\overline \lambda_{k,p}$, we also obtain a result similar to Theorem \ref{thm:1} (Corollary \ref{cor:1}). 
After proving Theorem \ref{thm:1} and Corollary \ref{cor:1}, we give a remark that the statements of them hold for more general metric measure spaces (Theorem \ref{thm:2}). 

\vspace{1em}
\noindent{\bf Organization}.
In \S \ref{sec:proof}, we prove Theorem \ref{thm:1} and some fundamental properties of our fake spectrum.
Furthermore, we consider a variant of $\overline \lambda_{k,p}$ and a Dirichlet boundary problem version of $\overline \lambda_{k,p}$. 
We compare fake and real spectra of the $p$-Laplacian. 
In \S \ref{sec:gen}, we remark that our results are generalized to more singular metric measure spaces (Theorem \ref{thm:2}). 
Furthermore, in there, we give an example satisfying the assumption of Theorem \ref{thm:2}, but which does not satisfies any curvature-dimension condition. 
In \S \ref{sec:Weyl}, we state a conjecture about an asymptotic law of packing radii related with a recent Mazurowski's asymptotic law of a spectrum of the $p$-Laplacian.
In Appendix \ref{sec:Morrey}, we give a proof of Theorem \ref{thm:HK} to complete the proof of Theorem \ref{thm:2}. 

\vspace{1em}
\noindent{\bf Acknowledgements}. 
The author would like to express my appreciation to Professors Kei Funano, Shouhei Honda and Yu Kitabbepu for valuable comments and discussions. 
In particular, K.~Funano told me several literatures related to our work, and S.~Honda taught me a discussion and literatures on analysis on Riemannian manifolds/metric measure spaces. 
This work was supported by JSPS KAKENHI 17H01091.

\section{Proof of Theorem \ref{thm:1} and several properties} \label{sec:proof}
Let $M = (M,g)$ denote a closed Riemannian manifold with a Riemannian metric $g$. 
We denote by $m$ the normalized volume measure $m := \mathrm{vol}_g / \mathrm{vol}_g(M)$, where $\mathrm{vol}_g$ is the standard volume measure of $(M,g)$. 
Since $\overline \lambda_{k,p}$ is invariant under multiplication of the measure with positive constant, 
the normalization of $m$ is not important. 

\subsection{Notation} \label{sec:notation}
We fix the notation.
Let $k$ be a positive integer and $p > 1$ a real number. 
For $x,y \in M$, $|x,y|$ stands for the distance between $x$ and $y$. 
For $A \subset M$ and $x \in M$, $|A,x| = |x,A| := \inf_{a \in A} |a,x|$. 
For $x \in M$ and $r > 0$, $U_r(x) := \{y \in M \mid |x,y| < r\}$ denotes the open $r$-ball around $x$. 
For a measurable function $f : M \to \mathbb R$, $\|f\|_p = (\int_M |f|^p\,dm)^{1/p}$ denotes the $p$-norm of $f$ with respect to $m$.
If $f$ has the weak derivative $\nabla f$, then the $(1,p)$-norm of $f$ is defined as 
\[
\|f\|_{1,p} := (\|f\|_p^p + \|\nabla f\|_p^p)^{1/p} \in [0, \infty].
\]
Let $W^{1,p}(M)$ denote the subspace of $L^p(M)$ with $\|f\|_{1,p} < \infty$.
For an open subset $\Omega$ in a complete Riemannian manifold, $W_0^{1,p}(\Omega)$ denotes the $W^{1,p}$-closure of the space of all Lipschitz functions which have the compact support in $\Omega$. 

Let $\Lip(M)$ be the set of all Lipschitz functions on $M$ and $\Lip(f)$ the Lipschitz constant of $f$.  
Note that for a Lipschitz function $f : M \to \mathbb R$, we have 
\begin{equation} \label{eq:Lip}
\Lip(f) = \| \nabla f\|_\infty. 
\end{equation}

\subsection{Lemmas and a proof of Theorem \ref{thm:1}}
For a non-empty bounded open subset $\Omega$ of a complete Riemannian manifold with $\partial \Omega \ne \emptyset$, we define the {\it inradius} of $\Omega$ as
\[
\mathrm{inrad}(\Omega) := \max_{x \in \Omega} |x, \partial \Omega|. 
\]
Here, $\partial \Omega$ stands for the topological boundary of $\Omega$.
This is the maximal radius of which $\Omega$ can contain a metric ball. 

\begin{lemma} \label{lem:open-Lip}
Let $\Omega$ be a bounded open subset in a complete Riemannian manifold with $\partial \Omega \ne \emptyset$.
Let $f$ be a Lipschitz function on the space which is zero outside $\Omega$ with $\|f\|_\infty = 1$. 
Then, we have 
\[
\inrad(\Omega)^{-1} \le \Lip(f).
\]
\end{lemma}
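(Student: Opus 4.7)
The plan is to bound $|f(x)|$ pointwise by the distance from $x$ to the boundary, then take suprema.

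First I would observe that, since $\Omega$ is open, $\partial\Omega \subset M \setminus \Omega$, so the hypothesis that $f$ vanishes outside $\Omega$ gives $f \equiv 0$ on $\partial\Omega$ directly (no continuity argument needed, though $f$ being Lipschitz makes this clear). Combined with $\partial\Omega \ne \emptyset$, we may use boundary points as reference points for the Lipschitz estimate.

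Next, for every $x \in \Omega$ and every $y \in \partial\Omega$, the Lipschitz bound gives
\[
|f(x)| = |f(x) - f(y)| \le \Lip(f)\,|x,y|.
\]
Taking the infimum over $y \in \partial\Omega$ yields $|f(x)| \le \Lip(f)\,|x,\partial\Omega|$, and then taking the supremum over $x \in \Omega$ yields
\[
\sup_{x \in \Omega} |f(x)| \le \Lip(f) \cdot \sup_{x \in \Omega} |x,\partial\Omega| = \Lip(f) \cdot \inrad(\Omega),
\]
where the last equality uses the definition of the inradius (the maximum exists by compactness of $\overline{\Omega}$, since $\Omega$ is bounded in a complete Riemannian manifold).

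Finally, because $f$ vanishes on $M \setminus \Omega$, the global supremum $\|f\|_\infty = 1$ is realized on $\Omega$, so $\sup_{x \in \Omega} |f(x)| = 1$. Substituting into the previous inequality gives $1 \le \Lip(f)\cdot \inrad(\Omega)$, which rearranges to the desired inequality. There is no real obstacle here; the only mildly delicate point is ensuring $f$ is zero on $\partial\Omega$ so that boundary points are legitimate test points for the Lipschitz estimate, and this is immediate from the openness of $\Omega$.
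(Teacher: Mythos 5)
Your proof is correct and uses essentially the same idea as the paper's: apply the Lipschitz bound between an interior point and a boundary point (where $f$ vanishes), and bound the resulting distance by the inradius. The only cosmetic difference is that the paper picks a near-maximizer $x$ with $|f(x)| > 1-\delta$ and a boundary point $y$ realizing $|x,\partial\Omega|$, then lets $\delta \to 0$, whereas you prove the clean pointwise estimate $|f(x)| \le \Lip(f)\,|x,\partial\Omega|$ first and then take suprema.
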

\begin{proof}
Since $\|f\|_\infty = 1$, for any $\delta > 0$, there exists $x \in \Omega$ such that $|f(x)| > 1 - \delta$. 
Let us take $y \in \partial \Omega$ with $|x,y| = |x, \partial \Omega|$. 
From the assumption, we have $f(y)=0$. 
Hence, we obtain 
\[
1-\delta < |f(x)-f(y)| \le \Lip(f) |x,y| \le \Lip(f) \inrad(\Omega). 
\]
Letting $\delta \to 0$, we obtain the conclusion. 
\end{proof}

\begin{lemma}[{\cite[Lemma 1.5]{JLM}}] \label{lem:JLM}
For a bounded open subset $A$ of a complete Riemannian manifold with $\partial A \ne \emptyset$, we have 
\[
\lim_{p \to \infty} \lambda_{1,p}^D(A)^{1/p} = \mathrm{inrad}(A)^{-1}. 
\]
\end{lemma}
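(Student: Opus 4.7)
I would prove the two inequalities $\limsup_{p \to \infty} \lambda_{1,p}^D(A)^{1/p} \le \inrad(A)^{-1}$ and $\liminf_{p \to \infty} \lambda_{1,p}^D(A)^{1/p} \ge \inrad(A)^{-1}$ separately, writing $R := \inrad(A)$ and $n := \dim M$ throughout.

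For the upper bound, the plan is to test the Rayleigh quotient against a cone-like function. Given $\epsilon \in (0, R)$, choose $x_0 \in A$ with $|x_0, \partial A| \ge R - \epsilon$ and set $f(x) := (R - \epsilon - |x, x_0|)_+$. Then $f$ is $1$-Lipschitz, has compact support in $U_{R - \epsilon}(x_0) \subset A$, so $f \in W^{1,p}_0(A)$, with $\|f\|_\infty = R - \epsilon$ and, by \eqref{eq:Lip}, $|\nabla f| \le 1$ almost everywhere. Because $m$ is a finite (indeed, probability) measure, $\|f\|_p \to R - \epsilon$ and $\|\nabla f\|_p \to \|\nabla f\|_\infty \le 1$ as $p \to \infty$, whence
\[
\lambda_{1,p}^D(A)^{1/p} \le \frac{\|\nabla f\|_p}{\|f\|_p} \longrightarrow \frac{1}{R - \epsilon},
\]
and $\epsilon \downarrow 0$ gives the desired upper bound.

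For the lower bound I would choose, for each large $p$, a near-minimizer $u_p \in W^{1,p}_0(A)$ with $\|u_p\|_p = 1$ and $\|\nabla u_p\|_p \le \lambda_{1,p}^D(A)^{1/p} + 1/p$; the upper bound already proven yields a uniform constant $C$ with $\|\nabla u_p\|_p \le C$. Fix any $q > n$; for $p \ge q$, Jensen's inequality (for the probability measure $m$) gives $\|u_p\|_q \le 1$ and $\|\nabla u_p\|_q \le C$, so $\{u_p\}_{p \ge q}$ is bounded in $W^{1,q}(A)$. Morrey's embedding $W^{1,q} \hookrightarrow C^{0, 1 - n/q}$ together with Arzelà-Ascoli then produces, along a subsequence, a uniform limit $u_\infty \in C^0(M)$ with $u_\infty \equiv 0$ on $M \setminus A$, which is also the weak $W^{1,q}$ limit, so weak lower semicontinuity gives
\[
\|\nabla u_\infty\|_q \le \liminf_{p \to \infty} \|\nabla u_p\|_q \le \liminf_{p \to \infty} \lambda_{1,p}^D(A)^{1/p}.
\]
A diagonal extraction over $q_j \to \infty$ ensures the same subsequence works for all $q$; sending $q \to \infty$ then shows $u_\infty$ is Lipschitz on $M$ with $\Lip(u_\infty) = \|\nabla u_\infty\|_\infty \le \liminf_{p \to \infty} \lambda_{1,p}^D(A)^{1/p}$. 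Uniform convergence combined with $\|u_p\|_\infty \ge \|u_p\|_p = 1$ (again Jensen) forces $\|u_\infty\|_\infty \ge 1$, so applying Lemma \ref{lem:open-Lip} to $u_\infty/\|u_\infty\|_\infty$ yields $\inrad(A)^{-1} \le \Lip(u_\infty)/\|u_\infty\|_\infty \le \Lip(u_\infty) \le \liminf_{p \to \infty} \lambda_{1,p}^D(A)^{1/p}$, completing the lower bound.

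The principal technical obstacle is preventing the limit $u_\infty$ from collapsing to zero: weak $L^q$-convergence alone cannot transfer the $L^p$-normalization of $u_p$ into an $L^\infty$-lower bound on the limit. What rescues the argument is that in the Morrey regime $p > n$ the convergence is \emph{uniform}, so the pointwise inequality $\|u_p\|_\infty \ge \|u_p\|_p = 1$ passes to $\|u_\infty\|_\infty \ge 1$, which is precisely the normalization required by Lemma \ref{lem:open-Lip}. The subsidiary verifications -- that the extension-by-zero of $u_\infty$ stays Lipschitz across $\partial A$ (which holds because $u_\infty|_{\partial A} = 0$) and that the diagonal extraction is compatible with uniform convergence -- are routine.
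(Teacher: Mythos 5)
Your proof is correct and follows essentially the same route as the paper's lim-inf argument (near-minimizers, uniform $W^{1,p}$-bound from the upper half, Morrey plus Arzel\`a--Ascoli to produce a uniformly converging Lipschitz limit, weak lower semicontinuity in $L^q$ followed by $q\to\infty$, then Lemma~\ref{lem:open-Lip}); you additionally supply the elementary cone-function proof of the upper bound, which the paper defers to \cite{JLM}. The only cosmetic difference is that you derive $\|u_\infty\|_\infty \ge 1$ where the paper derives $\|u_\infty\|_\infty = 1$, but either suffices after normalizing before invoking Lemma~\ref{lem:open-Lip}.
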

To use an argument of the proof of Lemma \ref{lem:JLM} later, we only give a proof of the lim-inf inequality. 
Suppose that the lim-sup inequality holds. 
Let $\epsilon > 0$. 
Let us take $u_p \in W^{1,p}_0(A)$ satisfying $\|u_p\|_p=1$ and 
\begin{equation} \label{eq:01}
\|\nabla u_p\|_p \le \lambda_{1,p}(A)^{1/p} + \epsilon. 
\end{equation}
By the lim-sup inequality, $\sup_{p > p_0} \|\nabla u_p\|_p < \infty$ for some $p_0 >1$. 
Due to Morrey's inequality (for instance, see \cite[Theorem 3 in p. 143]{EG}, \cite[Theorem 9.12]{Br}), 
we know that 
\begin{equation} \label{eq:Morrey}
|u_p(x)-u_p(y)| \le C_{p,n,d,\kappa} |x,y|^{1-n/p} \|\nabla u_p\|_p
\end{equation}
where $C_{p,n,d, \kappa}$ is a constant depend only on $p, n, d$ and $\kappa$, $n = \dim M$, $d = \mathrm{diam}(M)$ and $\kappa$ denotes the lower Ricci curvature bound of $M$. 
Note that 
\begin{equation} \label{eq:uniform Holder}
\sup_{p >p_0} C_{p,n,d, \kappa} < \infty
\end{equation}
holds.  
See for instance \cite[p.283 (28)]{Br} for the Euclidean case.
For a general case, we will verify in Appendix \ref{sec:Morrey}. 
Therefore, $u_p$ has a uniformly H\"older continuous representative.
Further, by \eqref{eq:uniform Holder}, we have 
\[
\sup_{x \in A}|u_p(x)| \le C_{p,n,d,\kappa} \inrad(A)^{1-n/p} \|\nabla u_p\|_p.
\]
Hence, $\sup_{p > p_0} \|u_p\|_\infty < \infty$. 
Due to Ascoli-Arzel\`a theorem, there exists a sequence $p_h \to \infty$ such that $\{u_{p_h}\}_h$ converges to a continuous function $u_\infty$ on $M$ uniformly, as $h \to \infty$. 
Moreover, by \eqref{eq:uniform Holder}, $u_\infty$ becomes a Lipschitz function.
In particular, for $q > p_0$, $u_{p_h}$ converges to $u_\infty$ $L^q$-strongly and $W^{1,q}$-weakly, as $h \to \infty$. 
Furthermore, since $\lim_{h \to \infty} \|u_{p_h}\|_q = \|u_\infty\|_q$, we have $\|u_\infty\|_\infty = 1$. 
Moreover, since $\nabla u_{p_h}$ converges to $\nabla u_\infty$ $L^q$-weakly, we obtain 
\[
\|\nabla u_\infty\|_q \le \liminf_{h \to \infty} \|\nabla u_{p_h}\|_q \le \liminf_{h \to \infty} \|\nabla u_{p_h}\|_{p_h}. 
\]
Here, the last inequality follows from the H\"older inequality. 
So, letting $q \to \infty$, we finally obtain 
\begin{equation} \label{eq:02}
\|\nabla u_\infty\|_\infty \le \lambda_{1,p}(A)^{1/p} + \epsilon < \infty. 
\end{equation}
By \eqref{eq:Lip}, we have $\Lip(u_\infty) = \|\nabla u_\infty\|_\infty$. 
Since $\|u_\infty\|_\infty = 1$, 
by Lemma \ref{lem:open-Lip}, we have 
\[
\mathrm{inrad}(A)^{-1} \le \Lip(u_\infty) \le \liminf_{p \to \infty} \lambda_{1,p}^D(A)^{1/p}. 
\]
This completes the proof of the lim-inf inequality of Lemma \ref{lem:JLM}.

\begin{lemma} \label{lem:inrad vs pack}
Let $\{\Omega_j\}_{0 \le j \le k}$ be a disjoint family of non-empty open subsets of a closed Riemannian manifold $M$. 
Then, we have 
\[
\min_{0 \le j \le k} \mathrm{inrad}(\Omega_j) \le \mathrm{pack}_{k+1}(M). 
\]
\end{lemma}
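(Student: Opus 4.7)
The plan is to produce, from the data $\{\Omega_j\}_{0 \le j \le k}$, a configuration of $k+1$ points of $M$ that are pairwise at distance at least $2r$, where $r := \min_{0 \le j \le k} \mathrm{inrad}(\Omega_j)$. Since $\mathrm{pack}_{k+1}(M)$ is defined as one-half of the max-min of pairwise distances over $(k+1)$-tuples, such a configuration immediately gives $\mathrm{pack}_{k+1}(M) \ge r$, which is the desired inequality.

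After disposing of the trivial case $r = 0$, for each $j$ I would use compactness of $\overline{\Omega_j}$ and continuity of $x \mapsto |x, \partial \Omega_j|$ to pick $x_j \in \Omega_j$ realizing $|x_j, \partial \Omega_j| = r_j := \mathrm{inrad}(\Omega_j) \ge r$. The next, and only mildly delicate, step is the inclusion $U_{r_j}(x_j) \subseteq \Omega_j$: the ball $U_{r_j}(x_j)$ is connected because any of its points can be joined to $x_j$ by a minimizing geodesic that remains inside the ball; it contains $x_j \in \Omega_j$; and it misses $\partial \Omega_j$ by the choice of $r_j$. Connectedness together with the decomposition $M = \Omega_j \sqcup \partial \Omega_j \sqcup (M \setminus \overline{\Omega_j})$ then forces $U_{r_j}(x_j) \subseteq \Omega_j$, and in particular $U_r(x_j) \subseteq \Omega_j$.

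Since the $\Omega_j$ are mutually disjoint, so are the balls $\{U_r(x_j)\}_{0 \le j \le k}$. To convert this into the distance bound $|x_i, x_j| \ge 2r$ for $i \ne j$, I would exploit the length-space structure of $(M, g)$: assuming $|x_i, x_j| < 2r$ and taking a minimizing geodesic $\gamma$ from $x_i$ to $x_j$ parametrized by arclength, the point $z := \gamma(|x_i, x_j|/2)$ satisfies $|x_i, z| = |z, x_j| = |x_i, x_j|/2 < r$, so $z$ lies in both $U_r(x_i)$ and $U_r(x_j)$, a contradiction. Hence $\min_{i \ne j} |x_i, x_j| \ge 2r$, and the tuple $(x_0, \dots, x_k)$ witnesses $\mathrm{pack}_{k+1}(M) \ge r$. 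The whole argument is essentially mechanical; the only point where one must be a bit careful is the inclusion $U_{r_j}(x_j) \subseteq \Omega_j$, which cannot quite be read off from the equality $|x_j, \partial \Omega_j| = r_j$ alone and requires the connectedness observation above.
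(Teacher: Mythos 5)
Your argument is correct and follows essentially the same route as the paper: pick $x_j$ realizing the inradius of $\Omega_j$, observe $U_{r_j}(x_j)\subset\Omega_j$, and use disjointness of the balls together with the length-space structure to conclude $|x_i,x_j|\ge r_i+r_j\ge 2\min_j r_j$, which gives the packing bound. Your extra care over the inclusion $U_{r_j}(x_j)\subseteq\Omega_j$ (connectedness of balls plus the decomposition $M=\Omega_j\sqcup\partial\Omega_j\sqcup(M\setminus\overline{\Omega_j})$) makes explicit a step the paper simply asserts, but it is the same underlying argument.
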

\begin{proof}
For $0 \le j \le k$, we set $r_j = \mathrm{inrad}(\Omega_j)$. 
Let us take $x_j \in \Omega_j$ with $|x_j, \partial \Omega_j| = r_j$ for all $0 \le j \le k$. 
Then, we have $U_{r_j}(x_j) \subset \Omega_j$. 
Hence, we obtain 
\begin{align*} \label{eq:inrad vs pack}
\min_{0 \le j \le k} r_j &\le 
\min_{0 \le i < j \le k} \frac{r_i+r_j}{2}
\le \min_{0 \le i < j \le k} \frac{|x_i,x_j|}{2} \le \mathrm{pack}_{k+1}(M).
\end{align*}
This completes the proof. 
\end{proof}

Let us give a proof of Theorem \ref{thm:1}. 
\begin{proof}[Proof of Theorem \ref{thm:1}]
Let us prove 
\begin{equation} \label{eq:limsup}
\limsup_{p \to \infty} \overline \lambda_{k,p}(M)^{1/p} \le \mathrm{pack}_{k+1}(M)^{-1}. 
\end{equation}
Let $r = \mathrm{pack}_{k+1}(M)$. 
Let us take $(x_0, x_1, \dots, x_k)$ a $(k+1)$-packer of $M$, that is, $\min_{0 \le i < j \le k} |x_i,x_j| =2 r$. 
We consider $1$-Lipschitz functions defined as
\[
u_i := \max \{ r - |x_i, \,\cdot\,|, 0 \}
\]
for $0 \le i \le k$.
Then, we have 
\begin{align*}
\lambda_{1,p}^D(U_r(x_i))^{1/p} 
\le 
\left( \frac{1}{m(U_r(x_i))} \int_{U_r(x_i)} u_i^p\, dm \right)^{-1/p}. 
\end{align*}
Letting $p \to \infty$, we obtain 
\[
\limsup_{p \to \infty} \overline \lambda_{k,p}(M)^{1/p} \le \max_{0 \le i \le k} (\|u_i\|_{L^\infty}^{-1}) = r^{-1} = \mathrm{pack}_{k+1}(M)^{-1}. 
\]
Thus, we have proved \eqref{eq:limsup}.

Let us prove
\begin{equation} \label{eq:liminf}
\liminf_{p \to \infty} \overline \lambda_{k,p}(M)^{1/p} \ge \mathrm{pack}_{k+1}(M)^{-1}. 
\end{equation}
For $\epsilon > 0$, let us take disjoint open subsets $A_{0,p}, \dots, A_{k,p} \subset M$ satisfying 
\[
\overline \lambda_{k,p}(M)^{1/p} + \epsilon \ge \max_{0 \le i\le k} \lambda_{1,p}^D(A_{i,p})^{1/p}. 
\]
We take $u_{i,p} \in W^{1,p}_0(A_{i,p})$ with 
\[
\lambda_{1,p}^D(A_{i,p})^{1/p} \le \frac{\|\nabla u_{i,p}\|_p}{\|u_{i,p}\|_p} \le \lambda_{1,p}^D(A_{i,p})^{1/p} + \epsilon. 
\]
We assume $\|u_{i,p}\|_p=1$. 
For $0 \le i \le k$ and $q \ge 2$, using an argument similar to the argument from \eqref{eq:01} to \eqref{eq:02}, 
we obtain a subsequence $p_h \to \infty$ such that $\{u_{i,p_h}\}_{h=1}^\infty$ 
converges in $L^q$-strongly and $W^{1,q}$-weakly to some $u_i$, as $h \to \infty$. 
Then we have $\|u_i\|_\infty = \lim_{h \to \infty} \|u_{i,p_h}\|_{p_h} = 1$ and 
\begin{align*}
\|\nabla u_i\|_\infty 
&\le \liminf_{p \to \infty} \lambda_{k,p}(M)^{1/p} + 2\epsilon \le \mathrm{pack}_{k+1}(M)^{-1} + 2\epsilon. 
\end{align*}
Moreover, $u_i$ is Lipschitz and $\|\nabla u_i\|_\infty = \Lip(u_i)$ by \eqref{eq:Lip}. 
Since $u_{i,p} u_{j,p} = 0$ on $M$ for $i \ne j$, we have $u_i u_j = 0 \text{ on } M$. 
Hence, $A_i := \{u_i \ne 0\}$ are mutually disjoint open subsets. 
Note that $\|u_i\|_\infty = 1$ implies $A_i \ne \emptyset$. 
By using an argument in the proof of Lemma \ref{lem:JLM} for $u_i$, we obtain 
\begin{align*}
\mathrm{inrad}(A_i)^{-1} &\le \Lip(u_i) = \|\nabla u_i\|_\infty \\
&\le \liminf_{p \to \infty} \|\nabla u_{i,p}\|_p = \liminf_{p \to \infty} \lambda_{1,p}^D(A_{i,p})^{1/p}+2\epsilon.
\end{align*}
Therefore, due to Lemma \ref{lem:inrad vs pack}, we have \eqref{eq:liminf}. 
This completes the proof of Theorem \ref{thm:1}.
\end{proof}

\subsection{Another variant} \label{sec:other lambda}
Let us define a value $\underline \lambda_{k,p}(M)$ by 
\begin{equation} \label{eq:under,k,p}
\underline \lambda_{k,p}(M) := \frac{1}{k+1} \inf_{A_0, \dots, A_k} \sum_{i=0}^k \lambda_{1,p}^D(A_i). 
\end{equation}
Here, the infimum runs over all families $\{A_i\}_{0 \le i \le k}$ of disjoint open subsets of $M$.
Obviously, $\underline {\lambda}_{k,p} \le \overline{\lambda}_{k,p}$ holds. 

\begin{corollary} \label{cor:1}
Let $M$ be a closed Riemannian manifold. 
Then, we have 
\[
\lim_{p \to \infty} \underline \lambda_{k,p}(M)^{1/p} = \mathrm{pack}_{k+1}(M)^{-1}.
\]
\end{corollary}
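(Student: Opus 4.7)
The plan is to deduce Corollary \ref{cor:1} directly from Theorem \ref{thm:1} via a squeeze argument, exploiting the elementary fact that for $k+1$ nonnegative numbers the arithmetic mean and the maximum agree up to the factor $k+1$, a factor which becomes harmless after raising to the $1/p$ and letting $p\to\infty$.

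Concretely, for any disjoint family $A_0,\dots,A_k$ of nonempty open subsets of $M$ one has the pointwise inequalities
\[
\max_{0\le i\le k}\lambda_{1,p}^D(A_i)\le \sum_{i=0}^{k}\lambda_{1,p}^D(A_i)\le (k+1)\max_{0\le i\le k}\lambda_{1,p}^D(A_i).
\]
Dividing by $k+1$ and taking the infimum over all such families (the same family of admissible tuples appears in both definitions \eqref{eq:over,k,p} and \eqref{eq:under,k,p}), I would obtain the sandwich
\[
\frac{1}{k+1}\,\overline{\lambda}_{k,p}(M)\le \underline{\lambda}_{k,p}(M)\le \overline{\lambda}_{k,p}(M).
\]
The right-hand inequality is already recorded in the text; the left-hand one is the new ingredient but is equally formal. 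Raising all three terms to the $1/p$-th power gives
\[
(k+1)^{-1/p}\,\overline{\lambda}_{k,p}(M)^{1/p}\le \underline{\lambda}_{k,p}(M)^{1/p}\le \overline{\lambda}_{k,p}(M)^{1/p}.
\]

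Finally I would let $p\to\infty$. Since $(k+1)^{-1/p}\to 1$ as $p\to\infty$ and since Theorem \ref{thm:1} gives $\overline{\lambda}_{k,p}(M)^{1/p}\to \operatorname{pack}_{k+1}(M)^{-1}$, both the left and the right side of the sandwich converge to $\operatorname{pack}_{k+1}(M)^{-1}$, and the squeeze theorem forces $\underline{\lambda}_{k,p}(M)^{1/p}$ to do the same.

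There is no real obstacle here; the whole content is the observation that the prefactor $k+1$ costs nothing in the $p\to\infty$ limit once one has extracted a $1/p$-th root, so Corollary \ref{cor:1} is a genuine corollary of Theorem \ref{thm:1} and does not require revisiting the Morrey-type compactness argument used in its proof.
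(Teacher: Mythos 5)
Your proof is correct, and it is cleaner than the one in the paper. Both approaches ultimately rest on the same elementary fact about $\ell^p$-norms of a finite vector, but you exploit it at the level of the raw quantities $\overline{\lambda}_{k,p}$ and $\underline{\lambda}_{k,p}$, whereas the paper argues by contradiction. Specifically, you use the deterministic sandwich
\[
\tfrac{1}{k+1}\,\overline{\lambda}_{k,p}(M)\le \underline{\lambda}_{k,p}(M)\le \overline{\lambda}_{k,p}(M),
\]
which follows immediately from $\max_i \lambda_{1,p}^D(A_i)\le\sum_i\lambda_{1,p}^D(A_i)\le(k+1)\max_i\lambda_{1,p}^D(A_i)$ plus monotonicity of infima, and then observe that $(k+1)^{-1/p}\to 1$ kills the prefactor. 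The paper instead assumes $\liminf_{p\to\infty}\underline{\lambda}_{k,p}(M)^{1/p}<\mathrm{pack}_{k+1}(M)^{-1}-\epsilon$, extracts near-optimal families $\{B_i\}$ for unboundedly many $p$, uses a uniform bound on the vectors $(\lambda_{1,p}^D(B_i)^{1/p})_i$ to show that their $\ell^p$-average and $\ell^\infty$-norm differ by less than $\epsilon/3$ for $p$ large, and derives a contradiction with Theorem~\ref{thm:1}. Your squeeze argument eliminates the $\epsilon/3$ bookkeeping, the contradiction scaffolding, and the need to pass through near-minimizers, so it is preferable; it also makes transparent, as you note, that the corollary does not require revisiting the Morrey-type compactness argument. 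One small point of hygiene: your chain of reductions uses that $\inf f\le\inf g$ whenever $f\le g$ pointwise on the common index set of admissible disjoint families, which is indeed the case for \eqref{eq:over,k,p} and \eqref{eq:under,k,p}, so there is no hidden gap.
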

\begin{proof}
It is clear that $\limsup_{p \to \infty} \underline \lambda_{k,p}(M)^{1/p} \le \lim_{p \to \infty} \overline \lambda_{k,p}(M)^{1/p} = \mathrm{pack}_{k+1}(M)^{-1}$. 
Suppose that 
\[
\liminf_{p \to \infty} \underline \lambda_{k,p}(M)^{1/p} < \mathrm{pack}_{k+1}(M)^{-1} - \epsilon
\]
holds for some $\epsilon > 0$. 
Hence, there exist infinitely many $p > 1$ such that we have 
\begin{equation} \label{eq:infinite-p}
\left( \frac{1}{k+1} \inf_{A_0, \dots, A_k} \sum_{i=0}^{k} \lambda_{1,p}^D(A_i) \right)^{1/p} 
< \mathrm{pack}_{k+1}(M)^{-1} - \epsilon. 
\end{equation}
Moreover, the set $J \subset (1, \infty)$ of all $p$ satisfying \eqref{eq:infinite-p} is unbounded.
In particular, for each $p \in J$, 
there exists a disjoint family $\{B_i\}_{i=0}^k$ of open subsets of $M$ such that 
\begin{equation*} \label{eq:X_p}
\left( \frac{1}{k+1} \sum_{i=0}^{k} \lambda_{1,p}^D(B_i) \right)^{1/p}
\le 
\left( \frac{1}{k+1} \inf_{A_0, \dots, A_k} \sum_{i=0}^{k} \lambda_{1,p}^D(A_i) \right)^{1/p} + \epsilon/3. 
\end{equation*}
We denote by $X_p$ the set of all $\{B_i\}_{i=0}^k$ as above. 
Furthermore, we set 
\[
Y_p:=
\left\{ (\lambda_{1,p}^D(B_i)^{1/p} )_{i=0}^k \in \mathbb R^{k+1} \,\middle|\, 
\{B_i\}_{i=0}^k \in X_p
\right\}. 
\]
Then, we have 
\[
\sup_{p \in J} \sup_{r \in Y_p} \|r\|_p < \infty, 
\] 
where $\|r\|_p^p = \frac{1}{k+1} \sum_{i=0}^k r_i^p$ for $r = (r_i)_i \in \mathbb R^{k+1}$. 
Therefore, there exists $p_0 > 1$ such that for any $p \in J$ with $p>p_0$ and $\{A_i\}_{i=0}^k \in X_p$, we have 
\[
\max_{0 \le i \le k} \lambda_{1,p}^D(A_i) - \epsilon/3 < 
\left( 
\frac{1}{k+1} \sum_{i=0}^k \lambda_{1,p}^D(A_i)
\right)^{1/p}. 
\]
This is a contradiction.
Thus, we have the conclusion of the corollary.
\end{proof}

\subsection{Monotonicity in $k$}
We prove a monotonicity of the sequence $\left\{ \overline \lambda_{k,p} \right\}_k$ in $k$. 
\begin{proposition}
For $M = (M,m)$ as in Theorem \ref{thm:1}, we have 
\[
\overline \lambda_{k,p}(M) \le \overline \lambda_{k+1,p}(M).
\]
\end{proposition}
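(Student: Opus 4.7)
The plan is to observe that the inequality is essentially a ``less competition'' statement: the infimum defining $\overline \lambda_{k,p}$ ranges over $(k+1)$-tuples of disjoint non-empty open subsets, while the infimum defining $\overline \lambda_{k+1,p}$ ranges over $(k+2)$-tuples. Any admissible family for the latter immediately supplies an admissible subfamily for the former, so the key observation is monotonicity of both the maximum and the feasible set under deletion of one component.

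Concretely, I would fix $\epsilon > 0$ and pick a disjoint family $\{A_i\}_{0 \le i \le k+1}$ of non-empty open subsets of $M$ with
\[
\max_{0 \le i \le k+1} \lambda_{1,p}^D(A_i) \le \overline \lambda_{k+1,p}(M) + \epsilon.
\]
The subfamily $\{A_i\}_{0 \le i \le k}$ is itself a disjoint family of non-empty open subsets of $M$, hence admissible for the infimum defining $\overline \lambda_{k,p}(M)$. Therefore
\[
\overline \lambda_{k,p}(M) \le \max_{0 \le i \le k} \lambda_{1,p}^D(A_i) \le \max_{0 \le i \le k+1} \lambda_{1,p}^D(A_i) \le \overline \lambda_{k+1,p}(M) + \epsilon.
\]
Letting $\epsilon \to 0$ concludes the proof.

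There is no real obstacle here; the only point one might worry about is whether dropping $A_{k+1}$ could violate some non-triviality requirement in the definition \eqref{eq:over,k,p}, but since the remaining $A_0,\dots,A_k$ are still non-empty and disjoint (open subsets of $M$), they form a valid competitor. The same argument also shows monotonicity of $\underline \lambda_{k,p}$ if one takes the normalization in \eqref{eq:under,k,p} into account, but that is not needed for the stated proposition.
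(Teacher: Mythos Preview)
Your proof is correct, and in fact more direct than the paper's. You simply delete one set from a near-optimal $(k+2)$-family to obtain an admissible $(k+1)$-family, and use that the maximum over a subfamily is no larger. The paper instead first establishes the auxiliary fact that for disjoint open sets $A,B$ one has
\[
\lambda_{1,p}^D(A\cup B)\le \max\{\lambda_{1,p}^D(A),\lambda_{1,p}^D(B)\},
\]
by taking near-minimizers $f\in W_0^{1,p}(A)$, $g\in W_0^{1,p}(B)$ and testing with $f+g$; it then merges $A_k$ and $A_{k+1}$ rather than discarding $A_{k+1}$. Your deletion argument is shorter and avoids this lemma entirely; the paper's approach has the minor side benefit of recording the merge inequality for $\lambda_{1,p}^D$, which is a natural companion to the domain monotonicity statements later in the section, but it is not needed for the proposition as stated.
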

\begin{proof}
Let $A$ and $B$ be open subsets in $M$ with $A \cap B = \emptyset$. 
Let $\epsilon > 0$. 
Let us take $f \in W^{1,p}_0(A),$ and $g \in W_0^{1,p}(B)$ with $\|f\|_p = \|g\|_p = 1$, 
\[
\|\nabla f\|_p \le \lambda_{1,p}^D(A)^{1/p} + \epsilon \text{ and } \|\nabla g\|_p \le \lambda_{1,p}^D(B)^{1/p} + \epsilon. 
\]
Then, $u = f+g \in W^{1,p}_0(A \cup B)$.
Furthermore, we have 
\begin{align*}
\frac{\|\nabla u\|_p^p}{\|u\|_p^p} &= \frac{\int_{A\cup B} |\nabla f+\nabla g|^p\,dm}{\int_{A\cup B} |f+g|^p\,dm} \\
&= \frac{\|\nabla f\|_p^p + \|\nabla g\|_p^p}{2} 
\le \max\{ \lambda_{1,p}^D(A), \lambda_{1,p}^D(B) \}. 
\end{align*}
Therefore, we conclude that $\lambda_{1,p}^D(A \cup B) \le \max\{ \lambda_{1,p}^D(A), \lambda_{1,p}^D(B) \}$.

We take a disjoint family $\{A_i\}_{i=0}^{k+1}$ of non-empty open subsets in $M$. 
Using the final conclusion of the first paragraph, we obtain 
\[
\lambda_{1,p}^D(A_k \cup A_{k+1}) \le \max \{ \lambda_{1,p}^D(A_k), \lambda_{1,p}^D(A_{k+1}) \}.
\]
In particular, 
\begin{align*}
\overline \lambda_{k,p}(M) &\le \max_{0 \le i < k} \{ \lambda_{1,p}^D(A_i), \lambda_{1,p}^D(A_k \cup A_{k+1}) \} \\
&\le \max_{0 \le i \le k+1} \lambda_{1,p}^D(A_i). 
\end{align*}
Since $\{A_i\}_{i=0}^{k+1}$ is arbitrary, we obtain the conclusion of the proposition. 
\end{proof}

We will see that $\overline \lambda_{k,p}(M)$ goes to infinity as $k \to \infty$ (Corollary \ref{cor:k to infinity}).  

\subsection{Min-max values for Dirichlet type problem} \label{sec:Dir}
Let $M$ be a complete Riemannian manifold. 
We consider a bounded open subset $\Omega$ of $M$. 
We consider three values defined as 
\begin{align*}
\inpack_k (\Omega) &:= 
\max_{x_1, \dots, x_k \in \Omega} \min_{1\le i \ne j \le k} \left\{ \frac{|x_i,x_j|}{2}, |x_i, \partial \Omega| \right\}, \\
\overline \lambda_{k,p}^D(\Omega) &:= \inf_{A_1, \dots, A_k} \max_{1 \le i \le k} \lambda_{1,p}^D(A_i), \\
\underline \lambda_{k,p}^D(\Omega) &:= \frac{1}{k}\inf_{A_1, \dots, A_k} \sum_{1 \le i \le k} \lambda_{1,p}^D(A_i).
\end{align*}
Here, in the last two values, the infimums run over all disjoint open subsets $A_i$ of $M$ such that $A_i \subset \Omega$ and that $\partial A_i \subset \overline{\Omega}$. 
The first value $\inpack_k (\Omega)$ is called the {\it $k$-th inscribed packing radius} of $\Omega$, which is the largest $r > 0$ such that $k$ balls of radius $r$ of $M$ are contained in $\Omega$ and are mutually disjoint.
Clearly, $\inpack_1(\Omega)$ is the usual inradius $\inrad(\Omega)$. 
Furthermore, we note that $\lambda_{1,p}^D = \overline \lambda_{1,p}^D = \underline \lambda_{1,p}^D$.
Due to an argument similar to the proof of Theorem \ref{thm:1} and Corollary \ref{cor:1}, we obtain: 
\begin{theorem} \label{thm:1-1}
For $\Omega$ as above, we have
\[
\lim_{p \to \infty} \overline \lambda_{k,p}^D(\Omega)^{1/p} = \lim_{p \to \infty} \underline \lambda_{k,p}^D(\Omega)^{1/p} = \inpack_{k}(\Omega)^{-1}.
\]
\end{theorem}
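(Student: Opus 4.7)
The plan is to mirror the proofs of Theorem \ref{thm:1} and Corollary \ref{cor:1} almost verbatim, with $\pack_{k+1}(M)$ everywhere replaced by $\inpack_k(\Omega)$, the index range $0\le i\le k$ replaced by $1\le i\le k$, and with a verification at two places that the Dirichlet-type constraint $A_i\subset\Omega$ is respected. Because $\underline\lambda_{k,p}^D\le\overline\lambda_{k,p}^D$, it suffices to prove $\limsup_p\overline\lambda_{k,p}^D(\Omega)^{1/p}\le\inpack_k(\Omega)^{-1}$, then $\liminf_p\underline\lambda_{k,p}^D(\Omega)^{1/p}\ge\inpack_k(\Omega)^{-1}$.

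For the upper bound I would pick an inscribed $k$-packer $(x_1,\dots,x_k)\in\Omega^k$ realizing $r=\inpack_k(\Omega)$ and test with the tent functions $u_i=\max\{r-|x_i,\cdot|,0\}$ exactly as in the limsup step of Theorem \ref{thm:1}. The inscribed condition $|x_i,\partial\Omega|\ge r$ ensures $U_r(x_i)\subset\Omega$, and $|x_i,x_j|\ge 2r$ for $i\ne j$ makes these balls mutually disjoint, so $\{U_r(x_i)\}_{1\le i\le k}$ is an admissible competitor for $\overline\lambda_{k,p}^D(\Omega)$. The same computation as in Theorem \ref{thm:1} then yields the desired bound, with $r^{-1}=\inpack_k(\Omega)^{-1}$.

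For the lower bound on $\overline\lambda_{k,p}^D$ I would repeat the proof of \eqref{eq:liminf}: fix $\epsilon>0$, pick disjoint near-minimizing families $\{A_{i,p}\}_{1\le i\le k}$ in $\Omega$ with $L^p$-normalized test functions $u_{i,p}\in W^{1,p}_0(A_{i,p})$, and apply \eqref{eq:Morrey}--\eqref{eq:uniform Holder} on a bounded region containing $\overline\Omega$ to extract a subsequence $p_h\to\infty$ along which each $u_{i,p_h}$ converges uniformly to a Lipschitz function $u_i$ with $\|u_i\|_\infty=1$ and $\Lip(u_i)\le\liminf_p\overline\lambda_{k,p}^D(\Omega)^{1/p}+2\epsilon$. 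One extra observation not needed in Theorem \ref{thm:1} is that $u_i$ vanishes on $M\setminus\Omega$, which is forced by uniform convergence together with the fact that every $u_{i,p}$ vanishes on this closed set; hence $A_i:=\{u_i\ne 0\}$ is a non-empty open subset of $\Omega$, and by disjointness of the supports $\{u_{i,p}u_{j,p}=0\}$ passed to the limit the $A_i$'s are pairwise disjoint. Lemma \ref{lem:open-Lip} then yields $\inrad(A_i)^{-1}\le\Lip(u_i)$.

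The role of Lemma \ref{lem:inrad vs pack} is played here by the inscribed-packing analogue: for any disjoint family $\{A_i\}_{1\le i\le k}$ of non-empty open subsets of $\Omega$, one has $\min_i\inrad(A_i)\le\inpack_k(\Omega)$, proved by choosing $x_i\in A_i$ with $|x_i,\partial A_i|=r_i:=\inrad(A_i)$; then $U_{r_i}(x_i)\subset A_i\subset\Omega$ gives $|x_i,\partial\Omega|\ge r_i$, and disjointness of the $A_i$'s gives $|x_i,x_j|\ge r_i+r_j\ge 2\min_l r_l$, so $(x_1,\dots,x_k)\in\Omega^k$ witnesses $\min_l r_l\le\inpack_k(\Omega)$. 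Combining with the previous paragraph and letting $\epsilon\to 0$ produces the lower bound for $\overline\lambda_{k,p}^D$. The transfer to $\underline\lambda_{k,p}^D$ is by the same averaging argument as in the proof of Corollary \ref{cor:1}, using that the vector $(\lambda_{1,p}^D(B_i)^{1/p})_{i=1}^k$ has uniformly bounded $\ell^p$-norm along nearly minimizing families. The only genuinely new technical point, and what I expect to be the main obstacle, is verifying that the limit functions $u_i$ inherit the support constraint $\{u_i\ne 0\}\subset\Omega$; the rest is a near-verbatim reduction from the closed-manifold argument.
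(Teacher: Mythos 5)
Your proposal is correct and follows exactly the route the paper intends: Theorem \ref{thm:1-1} is stated with only the remark that it follows ``due to an argument similar to the proof of Theorem \ref{thm:1} and Corollary \ref{cor:1},'' and you have filled in precisely that reduction, including the two points where the Dirichlet constraint genuinely enters (the inscribed-packing analogue of Lemma \ref{lem:inrad vs pack}, and the fact that the uniform limit of functions supported in $\Omega$ still vanishes on $M\setminus\Omega$). The details you supply — tent functions at an inscribed $k$-packer for the upper bound, the Morrey/Arzel\`a--Ascoli extraction with H\"older representatives on a bounded region containing $\overline\Omega$ for the lower bound, and the $\ell^p$-averaging transfer to $\underline\lambda^D_{k,p}$ — all check out.
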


\begin{remark} \upshape
The values $\overline \lambda_{k,p}^D$, $\underline \lambda_{k,p}^D$ and $\inpack_k$ are well-defined for compact Riemannian manifold with boundary. 
A corresponding statement to Theorem \ref{thm:1-1} for compact Riemannian manifolds with boundary also holds. 
\end{remark}

\subsection{Domain monotonicities}

The following two propositions directly follows from the definition of $\overline \lambda_{k,p}^D$ and $\overline \lambda_{k,p}$: 
\begin{proposition} \label{prop:dom mon-D}
Let $U, V$ be bounded open subsets in a complete Riemannian manifold.
If $U \subset V$, then 
\[
\overline \lambda_{k,p}^D(V) \le \overline \lambda_{k,p}^D(U).
\] 
\end{proposition}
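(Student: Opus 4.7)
The plan is to unwrap the definitions and observe that the admissibility constraints are monotone in the ambient region, so the set of admissible families for $U$ is a subset of the set of admissible families for $V$, whence taking the infimum over the larger collection can only decrease the value.

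More concretely, I would fix a family $\{A_i\}_{1 \le i \le k}$ of mutually disjoint open subsets of $M$ with $A_i \subset U$ (and hence $\partial A_i \subset \overline{A_i} \subset \overline{U}$), which is exactly an admissible family in the infimum defining $\overline \lambda_{k,p}^D(U)$. Since $U \subset V$, we immediately get $A_i \subset V$ and $\partial A_i \subset \overline{U} \subset \overline{V}$, so the same family is admissible in the infimum defining $\overline \lambda_{k,p}^D(V)$. Hence
\[
\overline \lambda_{k,p}^D(V) \le \max_{1 \le i \le k} \lambda_{1,p}^D(A_i).
\]
Taking the infimum of the right-hand side over all admissible families for $U$ gives the desired inequality.

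There is essentially no obstacle: no estimates on test functions are needed, because $\lambda_{1,p}^D(A_i)$ depends only on the intrinsic data of $A_i$ and not on which containing region ($U$ or $V$) it is viewed inside. The only small thing worth a line is to check that the boundary condition $\partial A_i \subset \overline{\Omega}$ in the definition is preserved under enlarging $\Omega$ from $U$ to $V$, which is immediate from $\overline{U} \subset \overline{V}$. This confirms that the inclusion of admissible families is strict-monotone in the correct direction for the inequality claimed.
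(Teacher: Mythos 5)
Your proof is correct and is exactly the argument the paper has in mind: the paper simply states that the proposition ``directly follows from the definition,'' and the content of that remark is precisely your observation that every admissible family $\{A_i\}$ for $U$ is also admissible for $V$ (since $A_i \subset U \subset V$ and $\partial A_i \subset \overline{U} \subset \overline{V}$), so the infimum defining $\overline\lambda_{k,p}^D(V)$ ranges over a larger collection and is therefore no larger.
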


\begin{proposition} \label{prop:dom mon-M}
Let $M$ be a closed Riemannian manifold and $U$ an open subset of $M$. 
Then, we have 
\[
\overline \lambda_{k,p}(M) \le \overline \lambda_{k+1,p}^D(U).
\] 
If $U$ is the union of disjoint family of non-empty open sets $U_0, \dots, U_k$, then we have 
\[
\overline \lambda_{k+1,p}^D(U) \le \max_{0 \le i\le k} \lambda_{1,p}^D(U_i). 
\]
\end{proposition}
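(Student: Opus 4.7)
My plan is to observe that both inequalities are direct consequences of identifying admissible families for the two min-max problems, as the paper itself signals. Since the proposition is asserted to follow directly from the definitions given in \eqref{eq:over,k,p} and in \S\ref{sec:Dir}, there should be no substantive analytic obstacle; the content is entirely a careful comparison of admissibility conditions.

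For the first inequality $\overline \lambda_{k,p}(M) \le \overline \lambda_{k+1,p}^D(U)$, I would fix an arbitrary admissible family $\{A_i\}_{i=0}^{k}$ for $\overline \lambda_{k+1,p}^D(U)$, meaning $k+1$ mutually disjoint non-empty open subsets $A_i \subset U$ with $\partial A_i \subset \overline U$. The key observation is that such a family is in particular a family of $k+1$ mutually disjoint non-empty open subsets of $M$, hence admissible for the infimum defining $\overline \lambda_{k,p}(M)$. Therefore
\[
\overline \lambda_{k,p}(M) \le \max_{0 \le i \le k} \lambda_{1,p}^D(A_i),
\]
and taking the infimum over admissible families for $\overline \lambda_{k+1,p}^D(U)$ on the right yields the desired bound. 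In effect, the infimum for $\overline \lambda_{k,p}(M)$ runs over a strictly larger collection, so it is no larger.

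For the second inequality, the plan is to use the given decomposition $U = U_0 \sqcup \dots \sqcup U_k$ itself as a test family for $\overline \lambda_{k+1,p}^D(U)$. The only thing to verify is admissibility: the $U_i$ are disjoint and non-empty by hypothesis, each $U_i$ is an open subset of $U$, and since $\overline{U_i} \subset \overline U$ we also have $\partial U_i \subset \overline U$. Once admissibility is in hand, the definition of $\overline \lambda_{k+1,p}^D(U)$ as an infimum of max-values immediately gives $\overline \lambda_{k+1,p}^D(U) \le \max_{0 \le i \le k} \lambda_{1,p}^D(U_i)$.

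I do not expect a real obstacle here. The only item deserving a moment of care, in both parts, is the auxiliary boundary condition $\partial A_i \subset \overline U$ appearing in the definition of $\overline \lambda_{k+1,p}^D(U)$, which must be checked for the transferred or test families; it is automatic in each case from $A_i \subset U$. Everything else is bookkeeping between two min-max infima.
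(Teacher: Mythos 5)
Your proof is correct and is precisely the argument the paper has in mind: the paper itself offers no proof, merely asserting that the proposition ``directly follows from the definition of $\overline \lambda_{k,p}^D$ and $\overline \lambda_{k,p}$,'' and your write-up supplies exactly the bookkeeping --- identifying every admissible family for $\overline \lambda_{k+1,p}^D(U)$ as an admissible family of $k+1$ disjoint non-empty open sets in $M$ for the first inequality, and using the given decomposition $\{U_i\}_{i=0}^k$ as a test family (checking $\partial U_i \subset \overline{U_i} \subset \overline U$) for the second. The one thing worth noting explicitly, which you implicitly use, is that empty $A_i$ would produce $\lambda_{1,p}^D(A_i) = +\infty$ (infimum over the empty set), so they can never help realize the infimum and the admissible families for the two problems genuinely match up once that degenerate case is discarded.
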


\begin{remark} \upshape 
In \cite{FS}, Funano and Sakurai considered higher-order $p$-Poincar\'e constants $\nu_{k,p}$ and $\nu_{k,p}^D$. 
Such values satisfy domain monotonicity formulas as in Proposition \ref{prop:dom mon-D}. 
Using this, they obtained \cite[Theorems 1.1, 3.4 and 1.2]{FS}. 
So, we also obtain statements similar to their results for $\overline \lambda_{k,p}$ and $\overline \lambda_{k,p}^D$. 
\end{remark}

\subsection{Lindqvist-Matei's type monotonicity}

The following is proved in \cite{L2} and \cite{Matei2} for the case of $k=1$: 
\begin{proposition}
Let $M$ be a closed Riemannian manifold. Then, the function $p \mapsto p \overline \lambda_{k,p}(M)^{1/p}$ is non-decreasing, for each $k$. 
If $\Omega$ is a bounded open subset of a complete Riemannian manifold, then the function $p \mapsto p \overline \lambda_{k,p}^D(\Omega)^{1/p}$ is non-decreasing, for each $k$. 
\end{proposition}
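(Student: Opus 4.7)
The plan is to bootstrap from the already-established $k=1$ case, invoke \cite{L2} and \cite{Matei2} exactly once, and then rearrange the $\inf$-$\max$ defining $\overline\lambda_{k,p}$ by a purely order-theoretic manipulation.

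First, I would record the input from the cited works: for every non-empty open set $A$ (with $\partial A \ne \emptyset$ in the Dirichlet setting), the function $p \mapsto p\,\lambda_{1,p}^D(A)^{1/p}$ is non-decreasing on $(1,\infty)$. This is precisely the $k=1$ case and is the only nontrivial analytic ingredient.

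Second, I would rewrite the target so that its dependence on $p$ is manifestly a pointwise infimum of non-decreasing functions. Since the map $x \mapsto x^{1/p}$ commutes with $\max$ on $[0,\infty)$,
\[
p\,\overline\lambda_{k,p}(M)^{1/p} \;=\; \inf_{\{A_i\}_{i=0}^k}\;\max_{0 \le i \le k}\; p\,\lambda_{1,p}^D(A_i)^{1/p},
\]
where the infimum runs over disjoint families of $k+1$ non-empty open subsets of $M$. For any such fixed admissible family $\mathcal{A} = \{A_i\}$, each factor $p \mapsto p\,\lambda_{1,p}^D(A_i)^{1/p}$ is non-decreasing by the first step, and a finite maximum of non-decreasing functions is again non-decreasing; hence $g_\mathcal{A}(p) := \max_i p\,\lambda_{1,p}^D(A_i)^{1/p}$ is non-decreasing in $p$ for every admissible $\mathcal{A}$.

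The third and conceptually key step is the elementary observation that a pointwise infimum of an arbitrary family of non-decreasing real-valued functions is again non-decreasing: for $1 < p \le q$ and any index $\alpha$, $\inf_{\beta} f_\beta(p) \le f_\alpha(p) \le f_\alpha(q)$, so taking the infimum over $\alpha$ on the right yields $\inf_\beta f_\beta(p) \le \inf_\alpha f_\alpha(q)$. Applied to the family $\{g_\mathcal{A}\}_\mathcal{A}$, this gives the asserted monotonicity of $p \mapsto p\,\overline\lambda_{k,p}(M)^{1/p}$. The statement for $\overline\lambda_{k,p}^D(\Omega)$ follows from the same three-step argument verbatim, since its definition in \S\ref{sec:Dir} has the identical $\inf$-$\max$ shape over a (slightly different) class of disjoint open subfamilies $A_i \subset \Omega$, each of which still falls under the $k=1$ Lindqvist--Matei monotonicity. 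There is no genuinely hard step; the only thing one has to notice is that all the individual monotonicities inside the $\inf$ point in the same direction, so the outer $\inf$ preserves them rather than destroying them, and the argument is therefore purely formal once \cite{L2,Matei2} is available.
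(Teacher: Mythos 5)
Your proposal is correct and is essentially the paper's own argument: both rely on the Lindqvist--Matei monotonicity of $p \mapsto p\,\lambda_{1,p}^D(A)^{1/p}$ and then observe that the $\inf$-$\max$ structure of $\overline\lambda_{k,p}$ transmits this monotonicity. The only cosmetic difference is that the paper passes through an explicit $\epsilon$-near-optimal family of open sets, whereas you invoke the abstract (and equivalent) fact that a pointwise infimum of non-decreasing functions is non-decreasing.
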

\begin{proof}
Let us recall that, in \cite{L2}, Lindqvist proved that the map $p \mapsto p\lambda_{1,p}^D(\Omega)^{1/p}$ is strictly increasing on $(1, \infty)$, for every bounded open set $\Omega$ in a complete Riemannian manifold. 
For $\epsilon > 0$, we take $A_0, \dots, A_k$ disjoint non-empty open sets in $M$ such that 
\[
\max_{0 \le i \le k} p\lambda_{1,p}^D(A_i)^{1/p} < p\overline \lambda_{k,p}(M)^{1/p} + \epsilon.
\]
Hence, for $q < p$, we have 
\[
q\overline \lambda_{k,q}(M)^{1/q} \le \max_{0 \le i \le k} q\lambda_{1,q}^D(A_i)^{1/q} < \max_{0 \le i \le k} p\lambda_{1,p}^D(A_i)^{1/p}. 
\]
Letting $\epsilon \to 0$, we have 
\[
q \overline \lambda_{k,q}(M)^{1/q} \le p \overline \lambda_{k,p}(M)^{1/p}. 
\]
A proof of the case of $\overline \lambda_{k,p}^D(\Omega)$ is similar to the proof as above. 
This completes the proof. 
\end{proof}
As in \cite{L2} and \cite{Matei2}, it might be true that $p \overline \lambda_{k,p}(M)^{1/p}$ and $p \overline{\lambda}_{k,p}^D(\Omega)$ are strictly increasing. 

\subsection{Comparison with real and fake spectra} \label{sec:real vs fake}
Let $M$ be a closed Riemannian manifold.
To the author's knowledge, there exist at least three ways to construct real spectra of $\triangle_p$. 
We denote such spectra by $\{\lambda_{k,p}^-(M)\}_k, \{ \lambda_{k,p}(M)\}_k$ and $\{ \lambda_{k,p}^+(M)\}_k$, following \cite{PAO}. 
From the left, they are introduced by Krasnoselskii (\cite{Kra}), by Perera (\cite{Perera}) and by Dr\'abek and Robinson \cite{DR}, respectively. 
For the definitions of them, we refer to \cite{PAO}. 
It is known that $\{\lambda_{k,p}^{\pm}\}_k$, $\{\lambda_{k,p}\}_k$ are real spectra of $\triangle_p$, that is, they are eigenvalues, and that are unbounded sequences. 
From the definitions we know
\[
\lambda_{k,p}^-(M) \le \lambda_{k,p}(M) \le \lambda_{k,p}^+(M).
\]
See, for instance, \cite[Proposition 4.7]{PAO}. 

Spectra $\lambda_{k,p}^{D,-}$, $\lambda_{k,p}^D$ and $\lambda_{k,p}^{D,+}$ for Dirichlet eigenvalue problem of $\triangle_p$ are also defined similar to the above case. 
These are unbounded and $\lambda_{k,p}^{D,-} \le \lambda_{k,p}^D \le \lambda_{k,p}^{D,+}$ holds. 

\begin{proposition} \label{prop:comparision}
Let $M$ be a closed Riemannian manifold and $U$ a bounded open set in a complete Riemannian manifold. 
Then we have 
\[
\lambda_{k,p}^+(M) \le \overline \lambda_{k,p}(M) \text{ and } \lambda_{k,p}^{D,+}(U) \le \overline \lambda_{k,p}^D(U). 
\]
\end{proposition}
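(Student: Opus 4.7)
The plan is to test the min-max characterization of the Dr\'abek--Robinson spectrum against a symmetric set built from first Dirichlet eigenfunctions on a near-optimal family of disjoint open sets. Recall that $\lambda_{k,p}^+$ admits a description of the form
\[
\lambda_{k,p}^+(M)=\inf_{K}\sup_{u\in K}\frac{\|\nabla u\|_p^p}{\|u\|_p^p},
\]
where $K$ ranges over closed symmetric subsets of the unit $L^p$-sphere in $W^{1,p}(M)$ (suitably quotiented to kill the zero-mode of constants) whose cohomological index, Yang index, or Krasnoselskii genus is at least $k$. Therefore it suffices to exhibit, for each $\epsilon>0$, such a $K$ with supremum at most $\overline\lambda_{k,p}(M)+O(\epsilon)$.

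Given $\epsilon>0$, I would choose mutually disjoint non-empty open sets $A_0,\ldots,A_k\subset M$ with $\max_i\lambda_{1,p}^D(A_i)<\overline\lambda_{k,p}(M)+\epsilon$ and approximate first Dirichlet eigenfunctions $u_i\in W_0^{1,p}(A_i)$ with $\|u_i\|_p=1$ and $\|\nabla u_i\|_p^p<\lambda_{1,p}^D(A_i)+\epsilon$, extended by zero to $M$. Because the $u_i$ have pairwise disjoint supports, any $u=\sum_{i=0}^{k}c_iu_i$ satisfies $\|u\|_p^p=\sum_i|c_i|^p$ and $\|\nabla u\|_p^p=\sum_i|c_i|^p\|\nabla u_i\|_p^p$. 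Setting
\[
K:=\Bigl\{\,\textstyle\sum_{i=0}^{k}c_iu_i\,\Bigm|\,\sum_{i=0}^{k}|c_i|^p=1\,\Bigr\},
\]
one obtains a closed symmetric subset of the $L^p$-unit sphere, homeomorphic via $(c_i)_i\mapsto\sum c_iu_i$ to the $\ell^p$-sphere of $\mathbb{R}^{k+1}$, hence to $S^k$; in any of the standard theories its topological index equals $k+1$. The $p$-additivity above gives immediately
\[
\sup_{u\in K}\|\nabla u\|_p^p\le\max_{0\le i\le k}\|\nabla u_i\|_p^p<\overline\lambda_{k,p}(M)+2\epsilon.
\]

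Plugging $K$ (or its image in the mean-zero quotient used on closed manifolds) into the min-max for $\lambda_{k,p}^+$ and letting $\epsilon\to 0$ gives the first inequality. The Dirichlet statement $\lambda_{k,p}^{D,+}(U)\le\overline\lambda_{k,p}^D(U)$ is strictly analogous and slightly cleaner, since there is no zero-mode to worry about inside $W_0^{1,p}(U)$: take $k$ disjoint open sets $A_1,\ldots,A_k$ with closures in $\overline U$, their first Dirichlet eigenfunctions, and build a $k$-dimensional symmetric set whose $S^{k-1}$-sphere has index $k$. I expect the main obstacle to be bookkeeping rather than analysis: one must match the precise indexing and normalization conventions of the Dr\'abek--Robinson construction in \cite{PAO} so that $K$ is admissible at level $k$ in the min-max, in particular correctly handling the trivial eigenvalue of constants on the closed manifold $M$ when passing between ``index $\ge k$'' on $W^{1,p}(M)$ and its reduction modulo constants. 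The substantive content—the $p$-additivity of $\|\cdot\|_p^p$ and $\|\nabla\cdot\|_p^p$ on disjoint supports, and the standard index computation for the $\ell^p$-sphere of a finite-dimensional subspace—is entirely elementary.
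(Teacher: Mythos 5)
Your proposal is substantively the same as the paper's proof: both build the $(k+1)$-dimensional subspace spanned by approximate first Dirichlet eigenfunctions $u_0,\dots,u_k$ supported in the disjoint sets $A_i$, and use the $p$-additivity on disjoint supports, $\bigl\|\sum_i t_i u_i\bigr\|_p^p=\sum_i|t_i|^p$ and $\bigl\|\nabla\sum_i t_i u_i\bigr\|_p^p=\sum_i|t_i|^p\|\nabla u_i\|_p^p$, to bound the Rayleigh quotient by $\max_i\|\nabla u_i\|_p^p$. The one point you flag as an unresolved obstacle — matching the index and normalization conventions so that your symmetric set $K$ is admissible at level $k$ in the Dr\'abek--Robinson min-max — is precisely what the paper sidesteps: rather than arguing directly with cohomological index, it introduces the modified $k$-th $p$-Poincar\'e constant $\widehat\nu_{k,p}(M)=\inf_V\sup_{u\in V\setminus\{0\}}\|\nabla u\|_p^p/\|u\|_p^p$, with the infimum over $(k+1)$-dimensional linear subspaces $V\subset W^{1,p}(M)$, cites Funano--Sakurai \cite[Remark 2.1]{FS} for $\lambda_{k,p}^+\le\widehat\nu_{k,p}$, and then proves $\widehat\nu_{k,p}\le\overline\lambda_{k,p}$ by the same computation you carry out. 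So your route would go through once the index bookkeeping is nailed down, but routing through $\widehat\nu_{k,p}$ discharges that step by citation and is cleaner.
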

\begin{proof}
To prove the first inequality, we compare a value $\widehat \nu_{k,p}$ considered in \cite{FS} with $\overline {\lambda}_{k,p}$. 
Here, $\widehat \nu_{k,p}$ is defined as 
\[
\widehat \nu_{k,p}(M) := \inf_{V} \sup_{u \in V \setminus \{0\}} \frac{\|\nabla u\|_p^p}{\|u\|_p^p}, 
\]
where the infimum runs over all $(k+1)$-dimensional linear subspaces $V$ of $W^{1,p}(M)$, that is called a modified $k$-th $p$-Poincar\'e constant. 
As discussed in \cite[Remark 2.1]{FS}, we can see that 
\[
\lambda_{k,p}^+ \le \widehat \nu_{k,p}.
\]
So, it suffices to show that 
\begin{equation} \label{eq:vs FS}
\widehat \nu_{k,p} \le \overline \lambda_{k,p}.
\end{equation}

Let $U_0, \dots, U_k$ be disjoint non-empty open subsets of $M$. 
Fix $\epsilon > 0$. 
Let $u_i \in W^{1,p}_0(U_i)$ satisfies $\|u_i\|_p = 1$ and $\lambda_{1,p}^D(U_i) \le \|\nabla u_i\|_p^p \le \lambda_{1,p}^D(U_i) + \epsilon$. 
Since $\{U_i\}_i$ is disjoint, $\{u_i\}_{i=0}^k$ is linearly independent. 
For $(t_i)_{i=0}^k \in \mathbb R^{k+1} \setminus \{0\}$, we have 
\[
\widehat \nu_{k,p}(M) \le 
\frac{\left\| \nabla \sum_i t_i u_i \right\|_p^p}{\left\| \sum_i t_i u_i \right\|_p^p} 
= \frac{\sum_i |t_i|^p \| \nabla u_i\|_p^p}{\sum_i |t_i|^p} \le \max_{0 \le i \le k} \|\nabla u_i\|_p^p \le \max_{0 \le i \le k} \lambda_{1,p}^D(U_i) + \epsilon.
\]
Letting $\epsilon \to 0$, we have $\widehat \nu_{k,p} \le \max_i \lambda_{1,p}^D(U_i)$. 
Since $\{U_i\}_i$ is an arbitrary disjoint family of open sets, we obtain \eqref{eq:vs FS}. 
This completes the proof of the first statement.
The second one is proved by an argument similar to the proof as above. 
\end{proof}

As a corollary to the above proposition, we have 
\begin{corollary} \label{cor:k to infinity}
$\lim_{k \to \infty} \overline \lambda_{k,p}(M) = \infty$ and $\lim_{k \to \infty} \overline \lambda_{k,p}^D(U) = \infty$. 
\end{corollary}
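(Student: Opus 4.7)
The plan is to derive both limits directly from Proposition \ref{prop:comparision} combined with the unboundedness of the Dr\'abek--Robinson spectra $\{\lambda_{k,p}^{+}(M)\}_k$ and $\{\lambda_{k,p}^{D,+}(U)\}_k$, which has already been recalled in \S \ref{sec:real vs fake} as a known fact (with references such as \cite{DR} and \cite{PAO}).

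First I would write down the chain of inequalities provided by Proposition \ref{prop:comparision}, namely
\[
\lambda_{k,p}^{+}(M) \le \overline{\lambda}_{k,p}(M) \quad \text{and} \quad \lambda_{k,p}^{D,+}(U) \le \overline{\lambda}_{k,p}^{D}(U),
\]
valid for every positive integer $k$. Since the left-hand sides are unbounded sequences in $k$ (each sequence consists of genuine eigenvalues of $\triangle_p$, respectively $\triangle_p$ with Dirichlet boundary condition on $U$, and such spectra are known to be unbounded), passing to the limit $k \to \infty$ in these inequalities yields
\[
\lim_{k\to\infty} \overline{\lambda}_{k,p}(M) = \infty, \qquad \lim_{k\to\infty} \overline{\lambda}_{k,p}^{D}(U) = \infty,
\]
which is exactly the content of the corollary.

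Strictly speaking, the word ``unbounded'' only gives $\limsup_{k\to\infty} \lambda_{k,p}^{+} = \infty$; but the sequences $\{\lambda_{k,p}^{+}\}_k$ and $\{\lambda_{k,p}^{D,+}\}_k$ are non-decreasing by construction (this is how the Krasnoselskii-genus-type min-max spectra are indexed), so unboundedness automatically upgrades to divergence to $+\infty$. The same monotonicity in $k$ for $\overline{\lambda}_{k,p}(M)$ has already been proved in the paper, which makes the statement for $\overline{\lambda}_{k,p}(M)$ an immediate consequence; for $\overline{\lambda}_{k,p}^{D}(U)$ an analogous monotonicity in $k$ follows by the same argument, or is simply bypassed by reading off $\lim$ as $\liminf \ge \lim_k \lambda_{k,p}^{D,+}(U) = \infty$.

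I do not expect a real obstacle here: once Proposition \ref{prop:comparision} is in place, the corollary is a one-line consequence of the classical unboundedness of the $p$-Laplacian spectra. The only thing to be slightly careful about is to cite the unboundedness of $\{\lambda_{k,p}^{+}\}_k$ and $\{\lambda_{k,p}^{D,+}\}_k$ explicitly (for instance via \cite{DR} or \cite[Proposition 4.7 and surrounding discussion]{PAO}) so that the deduction is fully justified.
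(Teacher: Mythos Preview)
Your proposal is correct and matches the paper's own approach exactly: the paper states the corollary immediately after Proposition~\ref{prop:comparision} with the sentence ``As a corollary to the above proposition, we have'', and gives no further argument, so the intended proof is precisely the comparison $\lambda_{k,p}^{+}\le\overline\lambda_{k,p}$ (and its Dirichlet analogue) together with the known unboundedness of $\{\lambda_{k,p}^{+}\}_k$ recalled in \S\ref{sec:real vs fake}.
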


\section{Generalization to singular spaces} \label{sec:gen}
In this section, we give a generalization of Theorem \ref{thm:1} and Corollary \ref{cor:1} to singular metric measure spaces. 
Note that the proofs of all the results above do not rely on the assumption that $m$ is the Riemannian measure of $(M,g)$. 
Moreover, the underlying topology of $M$ is not so important. 
We have only needed to care whether $(M,m)$ satisfies \eqref{eq:uniform Holder} and \eqref{eq:Lip} or not.
So, we explain a sufficient condition for (replacements of) \eqref{eq:uniform Holder} and \eqref{eq:Lip} being valid.
Purposes of this section are to explain the precise meaning of the following theorem, to give a proof of it and to show examples of spaces satisfying the assumption of the theorem: 
\begin{theorem} \label{thm:2}
Let $(X,m)$ be a  compact metric measure space, where $X$ is geodesic and $m$ is a Borel probability measure on $X$ with full support. 
We assume that $(X,m)$ is doubling \eqref{eq:doubling}, supports the Poincar\'e inequality \eqref{eq:Poincare} and has the Sobolev-to-Lipschitz property \eqref{eq:Lip3} described below.
Then, we have 
\[
\lim_{p \to \infty} \overline \lambda_{k,p}(X,m) = \lim_{p \to \infty} \underline \lambda_{k,p}(X,m) = \pack_{k+1}(X)^{-1}
\]
for $k \ge 1$. 
\end{theorem}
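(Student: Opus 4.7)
The plan is to re-run the proofs of Theorem \ref{thm:1} and Corollary \ref{cor:1}, using the minimal $p$-weak upper gradient in place of $|\nabla f|$ and the Newtonian/Cheeger Sobolev space in place of the manifold space $W^{1,p}$. The only places where the Riemannian structure was used are the uniform H\"older estimate \eqref{eq:uniform Holder} coming from Morrey's inequality, and the identity \eqref{eq:Lip} relating $\Lip(f)$ with $\|\nabla f\|_\infty$. I would therefore identify substitutes for these two ingredients in the metric measure space setting and check that no other step requires smoothness.

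The $\limsup$ direction transfers immediately. Given a $(k+1)$-packer $(x_0,\dots,x_k)$ with $\pack_{k+1}(X)=r$, the cut-off functions $u_i=\max\{r-|x_i,\cdot|,0\}$ are $1$-Lipschitz and vanish outside the disjoint balls $U_r(x_i)$; plugging them into \eqref{eq:1st-D} as test functions and letting $p\to\infty$ yields $\limsup_{p\to\infty}\overline\lambda_{k,p}(X,m)^{1/p}\le r^{-1}$, exactly as in \eqref{eq:limsup}. The corresponding bound for $\underline\lambda_{k,p}$ follows from $\underline\lambda_{k,p}\le\overline\lambda_{k,p}$.

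For the $\liminf$ direction I would first invoke a Haj\l asz--Koskela type Morrey inequality: on a doubling metric measure space supporting a $(1,p_0)$-Poincar\'e inequality, every Sobolev function with $p$ larger than the doubling exponent $Q$ satisfies a H\"older estimate $|u(x)-u(y)|\le C\,|x,y|^{1-Q/p}\|g_u\|_p$, where $g_u$ is the minimal $p$-weak upper gradient, with constant $C$ locally uniform in $p$; this is the substance of Theorem \ref{thm:HK} deferred to Appendix \ref{sec:Morrey}. Applied to near-minimizers $u_{i,p}\in W^{1,p}_0(A_{i,p})$, it yields a uniformly equicontinuous family, so Ascoli--Arzel\`a produces a subsequential uniform limit $u_i$ along some $p_h\to\infty$, and $u_{i,p_h}\to u_i$ $L^q$-strongly and $W^{1,q}$-weakly for every fixed $q$. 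The Sobolev-to-Lipschitz property then promotes the essential-supremum bound on the weak upper gradient of $u_i$ to a genuine Lipschitz constant, restoring the content of \eqref{eq:Lip}. Since Lemmas \ref{lem:open-Lip} and \ref{lem:inrad vs pack} are purely metric, they apply to the open sets $A_i=\{u_i\ne 0\}$ unchanged, and the chain of inequalities from the proof of Theorem \ref{thm:1} closes the argument. Corollary \ref{cor:1}'s elementary convexity step on $\mathbb R^{k+1}$ survives unaltered and upgrades the conclusion to $\underline\lambda_{k,p}$.

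The main obstacle is the uniform-in-$p$ Morrey estimate: extracting a constant $C$ that stays bounded on an interval $(p_0,\infty)$ from a single Poincar\'e inequality is the technical heart of the generalization, which is why it is postponed to the appendix. A subsidiary point is that $A_i=\{u_i\ne 0\}$ is open only after invoking continuity of $u_i$, which itself depends on the Sobolev-to-Lipschitz property; geodesicity of $X$ keeps $\inrad$ and $\pack_{k+1}$ well-behaved as purely metric quantities, so Lemma \ref{lem:inrad vs pack} applies verbatim.
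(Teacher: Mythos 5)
Your proposal is correct and matches the paper's argument: the paper's proof of Theorem~\ref{thm:2} is precisely to note that Theorem~\ref{thm:HK} supplies the uniform Morrey--H\"older estimate in place of \eqref{eq:uniform Holder} and that Proposition~\ref{prop:Lip} supplies the Lipschitz identification in place of \eqref{eq:Lip}, after which the proofs of Theorem~\ref{thm:1} and Corollary~\ref{cor:1} go through verbatim. Your write-up simply spells out the steps the paper compresses into one sentence, correctly identifying the two non-metric ingredients, the role of geodesicity in Lemma~\ref{lem:inrad vs pack}, and the need for continuity of $u_i$ before taking $A_i=\{u_i\ne 0\}$.
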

This is a generalization of corresponding statements in \cite[Theorem 1.1]{Ho} and \cite[Theorem 1.9.6]{AH}, to arbitrary $k$.

In the following, first we give the definition of the conditions \eqref{eq:doubling} and \eqref{eq:Poincare}. 
Next, we give the definition of the condition \eqref{eq:Lip3}. 

\subsection{Basic conditions}
We now explain the assumptions \eqref{eq:doubling}
and \eqref{eq:Poincare} mentioned in Theorem \ref{thm:2} 
which 
imply a numerical Morrey-type inequality \eqref{eq:Morrey} with a uniform estimate \eqref{eq:uniform Holder}.
Remark that
several conditions are simpler than corresponding ones given in literatures. 

Let $X$ denote a 
complete separable 
metric space. 
We say that $X$ is {\it geodesic} if for any $x,y \in X$, there exists a continuous curve $\sigma : [0,1] \to X$ such that 
$\sigma(0)=x$, $\sigma(1)=y$ and $|\sigma(s), \sigma(t)| = |x,y||s-t|$ for $s, t \in [0,1]$. 
Such a curve $\sigma$ is called a minimal geodesic from $x$ to $y$.
Let $m$ be a Borel probability measure on $X$ with full support. 
We call such a pair $(X,m)$ a metric measure space. 
We often call $(X,m)$ a geodesic metric measure space, to emphasize that $X$ is geodesic. 
When $X$ is compact, then we call $(X,m)$ a compact (geodesic) metric measure space. 

\begin{definition} \upshape
A  metric measure space $(X,m)$ is said to be {\it doubling} if 
there exists $C_D > 0$ such that 
\begin{equation} \label{eq:doubling}
m(U_{2r}(x)) \le C_D m(U_r(x))
\end{equation}
holds, for every $x \in X$ and $r > 0$. 
\end{definition}

For $f \in \Lip(X)$ and $x \in X$, the {\it local Lipschitz constant} of $f$ at $x$ is defined as 
\[
\lip(f)(x) = \lim_{r \to 0+} \sup_{y \in U_r(x) \setminus \{x\}} \frac{|f(x)-f(y)|}{|x,y|}.
\]
\begin{definition} \upshape
We say that $(X,m)$ {\it supports the Poincar\'e inequality} if there exists $p_0 \ge 1$, $C_P > 0$ and $\sigma \ge 1$ such that 
\begin{equation} \label{eq:Poincare}
\dashint_{U_r(x)} \left| 
f - \dashint_{U_r(x)} f\,dm 
\right|dm 
\le C_P \left(\dashint_{U_{\sigma r}(x)} \lip (f)^{p_0}\, dm\right)^{1/p_0}
\end{equation}
holds for $x \in X$, $r > 0$ and $f \in \Lip(X)$. 
Here, we use the standard notation: 
\[
\dashint_{U_r(x)} f \,dm = \frac{1}{m(U_r(x))} \int_{U_r(x)} f\,dm.
\]
\end{definition}
The inequality \eqref{eq:Poincare} is called a $p_0$-Poincar\'e inequality. 

Let us assume that $(X,m)$ is doubling \eqref{eq:doubling} and supports the Poincar\'e inequality \eqref{eq:Poincare}. 
Then the well-behaved $(1,p)$-Sobolev space is defined (\cite{Ch}, \cite{Ha}, \cite{Sha}) and is known that $\Lip(X)$ is dense in there. 
Indeed, 
for $f \in L^2(X)$, its {\it minimal relaxed gradient} $|Df|_\ast$ is defined as a nonnegative Borel function on $X$ satisfying a particular minimality condition (see \cite[Definition 4.2, Lemma 4.3]{AGS0}). 
This is a counterpart of the absolute gradient in the non-smooth setting.
When $f \in L^2(X) \cap \Lip(X)$, it is known that 
\begin{equation} \label{eq:Df=lip f}
|Df|_\ast (x) = \lip(f)(x)
\end{equation}
holds for $m$-almost everywhere, because we suppose \eqref{eq:doubling} and \eqref{eq:Poincare} (\cite[Theorem 5.1]{Ch}, \cite[Theorem 6.2]{AGS0}). 
Using the minimal relaxed gradient, we define the $(1,p)$-norm of $f \in L^p(X)$ as 
\[
\|f\|_{1,p} := \left( \|f\|_p^p + \||Df|_\ast \|_p^p \right)^{1/p}. 
\]
For $1 < p \le \infty$, the $(1,p)$-Sobolev space $W^{1,p}(X,m)$ is defined as the subspace of $f \in L^p(X)$ consisting of elements with finite $(1,p)$-norm. 
It is known that for $f \in W^{1,p}(X,m)$, the pair $(f, |Df|_\ast)$ satisfies \eqref{eq:Poincare} instead of the pair $(f, \lip(f))$ (see \cite[Proposition 8.1.3]{HKST}, \cite[Theorem 3.6]{AGS0}). 

We will prove the next theorem in Appendix \ref{sec:Morrey}
\begin{theorem}[{\cite[(25) Theorem 5.1]{HK}}] \label{thm:HK}
Let $(X,m)$ be a compact metric measure space being doubling \eqref{eq:doubling} and supporting the Poincar\'e inequality \eqref{eq:Poincare}. 
Then, for $p > \max\{p_0,s\}$ and for $f \in W^{1,p}(X,m)$, 
\[
|f(x)-f(y)| \le C(p) \||Df|_\ast\|_p |x,y|^{1-s/p}
\]
holds for every $x,y \in X$, where $s = \log_2 C_D$ and $p_0$ is the exponent appeared in \eqref{eq:Poincare}. 
Here, $C(p)$ is a constant depending only on $C_D$, $C_P$ appeared in \eqref{eq:doubling} and \eqref{eq:Poincare}, $d = \mathrm{diam}(X)$ and $p$ such that $\sup_{p > \max \{p_0,s\}} C(p) < \infty$. 
\end{theorem}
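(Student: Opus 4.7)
The plan is to prove this standard Hajłasz--Koskela Morrey-type estimate by a telescoping chain argument on dyadic balls, reducing first to the case of Lipschitz $f$ by density and then pushing the resulting pointwise bound through doubling and Poincaré at each scale. Since $(X,m)$ is doubling \eqref{eq:doubling} and supports the Poincaré inequality \eqref{eq:Poincare}, Lipschitz functions are dense in $W^{1,p}(X,m)$ for $p \ge p_0$, and for $f \in \Lip(X)$ we have $|Df|_\ast = \lip(f)$ a.e.\ by \eqref{eq:Df=lip f}. So I would first prove the pointwise Hölder bound for Lipschitz $f$ and then extend by density (using that a uniform Hölder estimate passes to $L^p$-limits of continuous functions).

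For Lipschitz $f$ and $x,y \in X$ with $r = |x,y|$, I would use that $X$ is geodesic to pick a midpoint $z$ on a minimal geodesic from $x$ to $y$, and construct three chains of balls: $B_i^x = U_{r/2^i}(x)$, $B_i^y = U_{r/2^i}(y)$, together with a fixed "bridge" ball $B^z$ comparable to $U_r(z)$ that contains $B_0^x$ and $B_0^y$. Writing $f_B := \dashint_B f\,dm$, the telescoping identity
\[
f(x) - f(y) = \sum_{i=0}^{\infty} (f_{B_i^x} - f_{B_{i+1}^x}) + (f_{B_0^x} - f_{B_0^y}) - \sum_{i=0}^{\infty} (f_{B_i^y} - f_{B_{i+1}^y})
\]
holds because Lebesgue differentiation, guaranteed by \eqref{eq:doubling}, gives $f(x) = \lim_{i \to \infty} f_{B_i^x}$ for continuous $f$. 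For each consecutive pair I would estimate $|f_{B_i} - f_{B_{i+1}}|$ by applying \eqref{eq:Poincare} on a ball of radius $\sim r/2^i$ containing both, yielding a bound of the form $C\, (r/2^i)\bigl(\dashint_{\sigma B_i} \lip(f)^{p_0}\,dm\bigr)^{1/p_0}$, and the bridge term is handled identically.

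I would then invoke Hölder's inequality to control each integrand by the global $L^p$-norm: $(\dashint_{\sigma B_i}\lip(f)^{p_0}\,dm)^{1/p_0} \le (\dashint_{\sigma B_i}\lip(f)^p\,dm)^{1/p} \le m(\sigma B_i)^{-1/p}\,\||Df|_\ast\|_p$, and use the doubling property iteratively to get $m(\sigma B_i)^{-1/p} \le C_D^{1/p}(2^i)^{s/p}\,d^{s/p}\,m(X)^{-1/p}$ with $s = \log_2 C_D$. Combining these, the $i$-th summand is bounded by a constant times $r^{1-s/p}\,d^{s/p}\,2^{-i(1-s/p)}\||Df|_\ast\|_p\,m(X)^{-1/p}$; summing the geometric series (which converges since $p > s$) produces the factor $(1 - 2^{-(1-s/p)})^{-1}$, giving the required Hölder estimate with Hölder exponent $1 - s/p$ and constant $C(p)$ expressible in terms of $C_D, C_P, d, p, m(X)$.

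The main technical point is to verify $\sup_{p > \max\{p_0,s\}} C(p) < \infty$. The only $p$-dependent factors are $d^{s/p}$, $m(X)^{-1/p}$, $C_D^{1/p}$, and the geometric series constant $(1-2^{-(1-s/p)})^{-1}$; each of these is bounded as $p$ ranges over $(\max\{p_0,s\},\infty)$ (tending to $1$, $1$, $1$, and $(1-1/2)^{-1}=2$ respectively as $p \to \infty$, and controlled near $p = \max\{p_0,s\}$ on any closed subinterval once one fixes an $\varepsilon$-margin). This uniformity is exactly what is needed to feed \eqref{eq:uniform Holder} into the proofs of Lemma \ref{lem:JLM} and Theorem \ref{thm:1} in the singular setting, and will be the delicate step since it is here that the exponent $s = \log_2 C_D$ interacts with $p$; the rest of the argument is a routine chaining estimate.
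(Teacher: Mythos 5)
Your proposal is correct in substance and follows the same underlying strategy as the paper --- a dyadic chaining argument powered by the Poincar\'e inequality, doubling, and summation of a geometric series (which converges precisely because $p>s$) --- but the organization differs in three ways worth comparing. First, the paper does not reduce to Lipschitz $f$ and then pass to a limit: it works directly with $f\in W^{1,p}$ by invoking that the pair $(f,|Df|_\ast)$ itself satisfies the Poincar\'e inequality (citing \cite[Proposition 8.1.3]{HKST} and \cite[Theorem 3.6]{AGS0}), which eliminates the Arzel\`a--Ascoli/density step you outline. Second, the paper factors the chaining estimate through the generalized Riesz potential $J_p^{\sigma,\Omega}$ of Haj\l asz--Koskela and its $L^\infty$ bound (Theorem \ref{thm:HK2}, parts (1)--(2)); your direct telescoping sum is the unwound version of the same computation, but the Riesz potential formulation makes the constants cleaner to track. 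Third, the paper's appendix proof explicitly avoids geodesicity --- it notes ``we do not assume that $X$ is geodesic'' and, instead of choosing a geodesic midpoint, handles the bridge by selecting a ball $E_i$ from a countable family that contains both $x$ and $y$ with $3r/2<r_i<2r$. Your use of a midpoint is harmless (the main theorem is only applied to geodesic spaces) but unnecessary; a ball of radius $2|x,y|$ centered at $x$ would serve the same purpose. Finally, your observation that the geometric-series factor $\bigl(1-2^{-(1-s/p)}\bigr)^{-1}$ blows up as $p\downarrow s$ is a real point: the paper's claimed $\sup_{p>\max\{p_0,s\}}C(p)<\infty$ should, strictly speaking, be read with a margin $p\ge\max\{p_0,s\}+\varepsilon$, but since the only use in the body of the paper is the limit $p\to\infty$ (cf.\ \eqref{eq:uniform Holder}, which only requires boundedness for $p>p_0$), this is immaterial.
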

So, this is a counterpart of \eqref{eq:uniform Holder} in the metric-measure setting. 

Furthermore, $\lambda_{1,p}^D(\Omega)$ is defined as 
\begin{equation} \label{eq:Dir-1,p}
\lambda_{1,p}^D(\Omega) = \inf
\left\{ 
\frac{\|\lip(f)\|_p^p}{\|f\|_p^p} \,\middle|\, f \in \Lip_0(\Omega)
\right\}
\end{equation}
for an open set $\Omega$ in $X$, where $\Lip_0(\Omega)$ stands for the set of all Lipschitz functions with compact support within $\Omega$. 
Note that \eqref{eq:Dir-1,p} coincides with \eqref{eq:1st-D} in the smooth setting.
So, using \eqref{eq:Dir-1,p}, we define $\overline \lambda_{k,p}(X,m)$ and $\underline \lambda_{k,p}(X,m)$ by the same formulas as \eqref{eq:over,k,p} and \eqref{eq:under,k,p}, respectively. 

\subsection{A counterpart of \eqref{eq:Lip}}
In the last subsection, we obtain a condition to support a counterpart of \eqref{eq:uniform Holder}. 
Next, we give a condition which support \eqref{eq:Lip}. 
Let $(X,m)$ be a metric measure space. 
\begin{definition}[{\cite[Definition 4.9]{Gi}}] \upshape \label{def:Sob_to_Lip}
We say that $(X,m)$ {\it has the Sobolev-to-Lipschitz property} if for any $f \in W^{1,2}(X,m) \cap W^{1,\infty}(X,m)$, then $f$ has a Lipschitz representative $\tilde f$ such that 
\begin{equation} \label{eq:Lip3}
\Lip(\tilde f) = \||D f|_\ast \|_\infty.
\end{equation}
\end{definition}
This is a direct assumption for \eqref{eq:Lip} being valid. 

As written after \cite[Definition 4.9]{Gi}, 
we show examples which have the Sobolev-to-Lipschitz property \eqref{eq:Lip3}: 
\begin{itemize}
\item CD$(K,N)$-spaces (\cite{Raj} and see a discussion in the after \cite[Definition 4.9]{Gi}). Here, $K \in \mathbb R$ and $1 < N < \infty$. 
\item RCD$(K,\infty)$-spaces (\cite{AGS1}). Here, $K \in \mathbb R$. 
\end{itemize}
For the definitions of CD-spaces and RCD-spaces, we refer to \cite{LV}, \cite{St}, \cite{AGS1}. 
Such spaces are regarded as generalized objects of Riemannian manifolds with weighted Ricci lower curvature bound and with an upper bound of the dimension, in a synthetic sense. 
Note that a compact CD$(K,N)$-space automatically satisfies
\eqref{eq:doubling}, and is known to satisfy \eqref{eq:Poincare} (\cite[Theorem 2]{Raj}).
Furthermore, if a compact CD$(K,\infty)$-space is doubling, then it supports the Poincar\'e inequality \eqref{eq:Poincare} (\cite[Theorem 1]{Raj}). 

We consider a more condition. 
For a nonnegative Borel function $f$ on $X$ and $x,y \in X$, we set 
\[
\mathcal F_f(x,y) := \inf_{\gamma} \int_0^{|x,y|} f\circ \gamma(s)\,ds
\]
where the infimum runs over all minimal geodesic $\gamma$ from $x$ to $y$. 
Here, we recall that $X$ is assumed to be geodesic. 

\begin{definition}[\cite{CC3}] \label{def:segment} \upshape 
We say that $(X,m)$ {\it satisfies the segment inequality} if 
there exists $C_S>0$ such that 
\begin{equation} \label{eq:segment} 
\int_{U_r(x) \times U_r(x)} \mathcal F_f(y,z) \, d(m \otimes m) \le C_S r m(U_r(x)) \int_{U_{3r}(x)} f \,dm
\end{equation}
holds for every $x \in X$, every $r > 0$ and every nonnegative Borel function $f$ on $X$. 
\end{definition}
Such a condition was appeared in \cite[Theorem 2.11]{CC}.
We employ a formulation in \cite{Ho}. 

We now summarize a relation among above conditions. 
\begin{proposition} \label{prop:Lip}
Let $(X,m)$ be a compact geodesic metric measure space 
being doubling \eqref{eq:doubling}. 
We consider the following conditions. 
\begin{enumerate}
\item $(X,m)$ satisfies the segment inequality \eqref{eq:segment}. 
\item $(X,m)$ has the Sobolev-to-Lipschitz property (Definition \ref{def:Sob_to_Lip}). 
\item $(X,m)$ satisfies \eqref{eq:Lip}, that is, for every $f \in \Lip(X)$, we have $\|\lip(f)\|_\infty = \Lip(f)$. 
\end{enumerate}
Then, $(2)$ implies $(3)$ and $(1)$ implies $(3)$.
Moreover, if $(X,m)$ supports the Poincar\'e inequality \eqref{eq:Poincare}, then $(3)$ and $(2)$ are equivalent.
\end{proposition}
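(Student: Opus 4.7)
Two elementary facts underlie the argument. First, $\lip(f)(x) \le \Lip(f)$ pointwise, so $\|\lip(f)\|_\infty \le \Lip(f)$ always holds; the content of (3) is therefore the reverse inequality $\Lip(f) \le \|\lip(f)\|_\infty$. Second, for any Lipschitz $f$ on $X$, taking the constant sequence $f_n \equiv f$ in the definition of the minimal relaxed gradient gives $|Df|_\ast \le \lip(f)$ $m$-almost everywhere, without any further hypothesis on $(X,m)$.

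For (2) $\Rightarrow$ (3), pick $f \in \Lip(X)$. Compactness of $X$ together with $|Df|_\ast \le \Lip(f) < \infty$ places $f$ in $W^{1,2}(X,m) \cap W^{1,\infty}(X,m)$. The Sobolev-to-Lipschitz property returns a Lipschitz representative $\tilde f$ with $\Lip(\tilde f) = \||Df|_\ast\|_\infty$; since $f$ and $\tilde f$ are both continuous and $m$ has full support, $\tilde f = f$ on all of $X$. Then the chain $\Lip(f) = \||Df|_\ast\|_\infty \le \|\lip(f)\|_\infty \le \Lip(f)$ collapses to equality.

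For (1) $\Rightarrow$ (3), set $L := \|\lip(f)\|_\infty$ and $N := \{\lip(f) > L\}$, a Borel set with $m(N) = 0$. The plan is to apply the segment inequality \eqref{eq:segment} to the bounded Borel function $g := \lip(f)\,\mathbf 1_N$. Since $\int_{U_{3r}(x)} g\,dm = 0$ for every ball, the inequality forces $\mathcal F_g(y,z) = 0$ for $(m \otimes m)$-a.e.\ pair $(y,z) \in U_r(x)^2$. For such a pair, choose a sequence of minimal geodesics $\gamma_n$ from $y$ to $z$ with $\int_0^{|y,z|} g \circ \gamma_n\,ds \to 0$. Using the standard fact that $\lip(f)$ is an upper gradient of the Lipschitz function $f$ along every Lipschitz curve (a metric-space chain rule argument), one splits
\[
|f(y)-f(z)| \le \int_0^{|y,z|} \lip(f) \circ \gamma_n\,ds \le L\,|y,z| + \int_0^{|y,z|} g \circ \gamma_n\,ds,
\]
and lets $n \to \infty$ to conclude $|f(y)-f(z)| \le L\,|y,z|$. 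Since $m$ has full support, this estimate holds on a dense subset of $X \times X$, and continuity of $f$ extends it to every pair $x, y \in X$, yielding $\Lip(f) \le L$. The main obstacle here is precisely the geodesic-selection step: the segment inequality only controls averages over pairs of endpoints, and the choice of weight $g = \lip(f)\mathbf 1_N$ is engineered so that the contribution from the $m$-null bad set $N$ does not ruin the integral along the optimizing geodesics.

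For (3) $\Rightarrow$ (2) under the Poincaré assumption, take $f \in W^{1,2}(X,m) \cap W^{1,\infty}(X,m)$. Applying Theorem \ref{thm:HK} for each large finite $p$ and letting $p \to \infty$, using $\||Df|_\ast\|_p \le \||Df|_\ast\|_\infty$ (probability space), $|x,y|^{1-s/p} \to |x,y|$, and the uniform bound $\sup_p C(p) < \infty$, produces a continuous, indeed Lipschitz, representative $\tilde f$ of $f$ with some a priori non-sharp Lipschitz constant. Under doubling and Poincaré, formula \eqref{eq:Df=lip f} gives $\lip(\tilde f) = |D\tilde f|_\ast = |Df|_\ast$ $m$-a.e., so $\|\lip(\tilde f)\|_\infty = \||Df|_\ast\|_\infty$. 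Applying (3) to the Lipschitz function $\tilde f$ then sharpens the constant to $\Lip(\tilde f) = \|\lip(\tilde f)\|_\infty = \||Df|_\ast\|_\infty$, which is exactly the Sobolev-to-Lipschitz property.
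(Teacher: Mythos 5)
Your proof is correct, and two of the three implications diverge from the paper in a way worth noting. For $(1)\Rightarrow(3)$ the paper simply cites \cite[Proposition 2.8]{Ho}, whereas you reproduce a full argument: take $L=\|\lip(f)\|_\infty$, $N=\{\lip(f)>L\}$, apply the segment inequality to $g=\lip(f)\mathbf 1_N$ to force $\mathcal F_g=0$ $m\otimes m$-a.e., then use that $\lip(f)$ is an upper gradient along geodesics to get $|f(y)-f(z)|\le L|y,z|$ on a dense set and conclude by continuity; this is a valid self-contained derivation. For $(2)\Rightarrow(3)$ the paper declares it ``trivial because of \eqref{eq:Df=lip f}'', but that identity $|Df|_*=\lip(f)$ is established in the paper under doubling \emph{and} Poincar\'e, while Proposition \ref{prop:Lip} assumes only doubling for this implication. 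Your version sidesteps this by invoking only the one-sided inequality $|Df|_*\le\lip(f)$, which holds unconditionally from the definition of the relaxed gradient, together with $\Lip(\tilde f)=\Lip(f)$ once the two continuous functions are shown to coincide; the chain $\Lip(f)=\||Df|_*\|_\infty\le\|\lip(f)\|_\infty\le\Lip(f)$ then closes without Poincar\'e. For $(3)\Rightarrow(2)$ your argument is the same as the paper's: use Theorem \ref{thm:HK} with $\||Df|_*\|_p\le\||Df|_*\|_\infty$, the uniform bound on $C(p)$, and $p\to\infty$ to extract a Lipschitz representative, then apply \eqref{eq:Df=lip f} (now legitimately, under Poincar\'e) and (3) to sharpen the constant. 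Net effect: your route is more elementary and self-contained where the paper leans on a citation, and more careful about which hypotheses are actually in force for $(2)\Rightarrow(3)$.
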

\begin{proof}
The implication (1) $\Rightarrow$ (3) is proved by \cite[Proposition 2.8]{Ho}. 
The implication (2) $\Rightarrow$ (3) is trivial, because of \eqref{eq:Df=lip f}. 
Supposing that $(X,m)$ supports the Poincar\'e inequality, we prove (3) $\Rightarrow$ (2). 
Let us take $f \in W^{1,\infty}(X,m)$. 
Since $\| |Df|_\ast \|_p \le \| |Df|_\ast \|_\infty$, by Theorem \ref{thm:HK}, if $p$ is large enough, 
then $f$ has a H\"older representative. 
We denote it by $f$ again. 
Since $f$ is $(1-O(p))$-H\"older and its H\"older constant is uniformly bounded, $f$ is Lipschitz. 
So, by (3) and \eqref{eq:Df=lip f}, we have $\||Df|_\ast \|_\infty = \|\lip(f)\|_\infty = \Lip(f)$. 
This completes the proof. 
\end{proof}

Let us start a proof of Theorem \ref{thm:2}. 
\begin{proof}[Proof of Theorem \ref{thm:2}]
Let $(X,m)$ be as in Theorem \ref{thm:2}. 
Due to Theorem \ref{thm:HK} and Proposition \ref{prop:Lip}, the argument of Theorem \ref{thm:1} works in this setting. 
So, we obtain the conclusion. 
\end{proof}

As mentioned after Definition \ref{def:Sob_to_Lip}, compact CD$(K,N)$-spaces and compact RCD$(K,\infty)$-spaces having doubling measure satisfy the assumption of Theorem \ref{thm:2}. 

\subsection{An example satisfying the assumption of Theorem \ref{thm:2}, but is not CD$(K,\infty)$} \label{sec:example}
In this subsection, we show an example that satisfies the assumption of Theorem \ref{thm:2}, but it is not a CD$(K,\infty)$-space for every $K \in \mathbb R$. 
In particular, it is neither an RCD$(K,\infty)$-space nor a CD$(K,N)$-space, for $K \in \mathbb R$ and $1 < N < \infty$.

\subsubsection{Basic definitions}
For a proof of the following facts, we refer to \cite{Vi} and \cite{LV}. 
Let $X = (X,d)$ be a compact metric space. 
Let us denote by $P(X)$ the set of all Borel probability measures on $X$. 
For $\mu, \nu \in P(X)$, their {\it coupling} $\xi$ is a measure $\xi \in P(X \times X)$ satisfying $\xi(A \times X) = \mu(A)$ and $\xi(X \times A) = \nu(A)$ for every Borel set $A \subset X$. 
The $L^2$-{\it Wasserstein distance} $W(\mu, \nu)$ between $\mu$ and $\nu$ is defined by 
\[
W(\mu,\nu) := \inf_\xi \|d\|_{L^2(X \times X, \xi)}, 
\]
where the infimum runs over all couplings $\xi$ between $\mu$ and $\nu$. 
It is known that a minimizer $\xi$ of $W(\mu,\nu)$ exists. 
Such a coupling is called an {\it optimal coupling} (with respect to $W$). 
We call the pair $(P(X), W)$ the {\it $L^2$-Wasserstein space} over $X$. 
Since $X$ is compact, so is $P(X)$. 
Moreover, if $X$ is geodesic, so is $P(X)$. 
A geodesic in $(P(X), W)$ is called a {\it Wasserstein geodesic}. 

Let us explain a relation among Wasserstein geodesics, optimal couplings and optimal transference plans. 
Let us assume that $X$ is compact and geodesic. 
Let us denote by $\mathrm{Geo}(X)$ the set of all minimal geodesics in $X$ parametrized by $[0,1]$. 
For $t \in [0,1]$, we define the evaluation of curves at $t$ as
\[
e_t : C([0,1],X) \ni \gamma \mapsto \gamma(t) \in X. 
\]
Here, $C([0,1],X)$ stands for the set of all continuous curves from $[0,1]$ to $X$ equipped with the uniform topology. 
This map implies the push-forward of measures: 
\[
(e_t)_\# : P(C([0,1],X)) \ni \pi \mapsto (e_t)_\# \pi \in P(X). 
\]
It is known that for $\mu,\nu \in P(X)$ and an optimal coupling $\xi$ of them, there exists $\pi \in P(\mathrm{Geo}(X))$ such that $[0,1] \ni t \mapsto (e_t)_\# \pi \in P(X)$ is a Wasserstein geodesic from $\mu$ to $\nu$ and that $\xi = (e_0, e_1)_\# \pi$. 
Conversely, every Wasserstein geodesic is obtained as above (see \cite[Corollary 7.22]{Vi}, \cite[Proposition 2.10]{LV}). 
Such a $\pi$ is called an {\it optimal transference plan} from $\mu$ to $\nu$. 


We now recall the following theorem: 
\begin{theorem}[{\cite[Corollary 1.4]{RajS}}] \label{thm:RS}
Let $(X,d,m)$ be as above. 
If $(X,d,m)$ is strong CD$(K,\infty)$ for some $K \in \mathbb R$, then 
for every $\mu, \nu \in P(X)$ which are absolutely continuous in $m$, there exists a unique optimal transference plan from $\mu$ to $\nu$. 
\end{theorem}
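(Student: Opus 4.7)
The plan is to deduce uniqueness from two ingredients: essential non-branching of optimal transport in strong $\mathrm{CD}(K,\infty)$-spaces, which is the main theorem of \cite{RajS}, together with the absolute continuity propagation theorem of \cite{Raj}. Once these two facts are in hand, uniqueness follows by a standard disintegration argument, since essential non-branching together with absolute continuity of both marginals forces any two optimal transference plans to agree on the fiber above each endpoint.

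To set up the core contradiction, I would suppose two distinct optimal transference plans $\pi_0, \pi_1 \in P(\mathrm{Geo}(X))$ from $\mu$ to $\nu$ exist, with $\mu, \nu \ll m$. Since the set of optimal transference plans between $\mu$ and $\nu$ is convex, the average $\widetilde \pi := (\pi_0 + \pi_1)/2$ is also optimal, so the curve $t \mapsto \widetilde \mu_t := (e_t)_\# \widetilde \pi$ is a Wasserstein geodesic satisfying $\widetilde \mu_t = (\mu_t^0 + \mu_t^1)/2$, where $\mu_t^i := (e_t)_\# \pi_i$. By the absolute continuity propagation theorem, all of $\mu_t^0$, $\mu_t^1$, $\widetilde \mu_t$ are absolutely continuous with respect to $m$ for every $t \in (0,1)$. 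Applying the strong $\mathrm{CD}(K,\infty)$ inequality along each of the three Wasserstein geodesics gives matching upper bounds on the relative entropies $\mathrm{Ent}_m(\mu_t^i)$ and $\mathrm{Ent}_m(\widetilde \mu_t)$. Crucially, since $r \mapsto r \log r$ is strictly convex, whenever $\pi_0 \ne \pi_1$ there must be some $t \in (0,1)$ at which the densities of $\mu_t^0$ and $\mu_t^1$ differ on a set of positive $m$-measure, yielding the strict inequality
\[
\mathrm{Ent}_m(\widetilde \mu_t) < \tfrac{1}{2}\bigl(\mathrm{Ent}_m(\mu_t^0) + \mathrm{Ent}_m(\mu_t^1)\bigr).
\]

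The main obstacle is converting this single strict improvement into an outright contradiction. Strong $\mathrm{CD}(K,\infty)$ asserts only an upper bound on entropy along Wasserstein geodesics, so a strict improvement at a single time does not directly violate it. To close the argument, one must iterate the midpoint construction, exploiting the branching structure to generate a sequence of Wasserstein geodesics between the fixed marginals $\mu$ and $\nu$ whose midpoint entropies decrease by a definite amount at each stage. Passing to a limit within the compact $L^2$-Wasserstein space, one would obtain a $W$-geodesic whose midpoint measure has entropy below any preassigned threshold, contradicting the lower bound that compactness of $X$ and fixed total mass impose on $\mathrm{Ent}_m$ restricted to probability measures absolutely continuous with respect to $m$. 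Making this iteration quantitative, in particular tracking the absolute continuity of every intermediate measure and verifying that the strict entropy improvement is bounded away from zero uniformly as the iteration proceeds, is the technical heart of \cite{RajS} and the step I expect to require the most careful bookkeeping.
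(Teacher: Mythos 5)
Your opening paragraph identifies the correct high-level route -- essential non-branching (the main theorem of Rajala--Sturm), propagation of absolute continuity along Wasserstein geodesics, and then a structural/disintegration argument to force any optimal plan to be induced by a map, from which uniqueness follows by convexity of the set of optimal plans. That is indeed the architecture of the cited proof. But the rest of your proposal abandons that route and instead tries to derive a contradiction directly from an entropy-iteration, and this version has genuine gaps.

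First, the strict inequality $\mathrm{Ent}_m(\widetilde\mu_t) < \tfrac12(\mathrm{Ent}_m(\mu_t^0)+\mathrm{Ent}_m(\mu_t^1))$ requires that $\mu_t^0 \ne \mu_t^1$ as measures for some $t\in(0,1)$, and you simply assert that ``whenever $\pi_0\ne\pi_1$ there must be some $t$'' at which the densities differ. This is not automatic: two distinct plans concentrated on geodesics can in principle share every time-marginal, and ruling this out is exactly the kind of non-branching assertion one is trying to establish, not a free consequence of $\pi_0\ne\pi_1$. Second, even granting a strict improvement at one $t$, your iteration scheme does not close. You acknowledge needing the improvement to be ``bounded away from zero uniformly as the iteration proceeds,'' but give no mechanism for that; the decrements could shrink geometrically, leaving the entropy bounded below and producing no contradiction. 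Calling this ``the technical heart of [RajS]'' mischaracterizes their paper: Rajala--Sturm do not run such an iteration. They prove essential non-branching by a single, more delicate cut-and-paste construction of intermediate Wasserstein geodesics (mixing pieces of branching geodesic plans to beat the $\mathrm{CD}(K,\infty)$ entropy bound once), and then derive uniqueness of optimal plans/maps from essential non-branching via a Gigli-type structural argument on cyclically monotone sets, not by iterated entropy descent. Following through on your own first paragraph -- essential non-branching implies optimal plans are maps, plus convexity of the set of optimal plans -- is the right way to finish, and is independent of the entropy contradiction you sketch.
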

In the above situation, a Wasserstein geodesic from $\mu$ to $\nu$ is unique, due to the correspondence between optimal transference plans and Wasserstein geodesics. 
For the definition of {\it strong CD-condition}, we refer to \cite{RajS}.
Moreover, the optimal transference plan given in Theorem \ref{thm:RS} is induced by a map (for precise meaning, we refer to \cite{RajS}).

Now, we prove: 
\begin{proposition} \label{prop:example}
There exists a compact geodesic metric measure space $X$ such that $X$ is doubling, supports the Poincar\'e inequality and satisfies the Sobolev-to-Lipschitz property.
However, it is not a CD$(K,\infty)$-space for any $K \in \mathbb R$.
\end{proposition}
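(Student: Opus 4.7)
The plan is to exhibit a compact geodesic metric measure space $(X,m)$ obtained by gluing two smooth Riemannian pieces at a single point, and to verify the three analytic hypotheses while producing a direct obstruction to CD$(K,\infty)$. A natural candidate is the wedge sum $X = \mathbb S^2 \vee \mathbb S^2$ of two round unit $2$-spheres identified at a single point $p_0$, endowed with the quotient length metric and the normalized sum $m = \frac{1}{2}(m_1 + m_2)$ of the two Riemannian volume measures. Simpler variants (a tripod of three unit intervals meeting at a common endpoint, a figure-eight, or two disks glued along their centres) are equally admissible and may be easier for explicit estimates.

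To verify the hypotheses, I would argue locally on each sheet and separately at $p_0$. For doubling \eqref{eq:doubling}, balls around interior points of either sheet are just Riemannian balls; balls around $p_0$ decompose into two congruent half-balls sharing $p_0$, whose total measure is at most twice that of either half, so the doubling constant is bounded by a fixed multiple of the doubling constant of the sphere. For the Poincar\'e inequality \eqref{eq:Poincare}, cover $X$ by the two smooth sheets on which the usual Poincar\'e inequality holds, and patch across $p_0$ using that minimal geodesics from one sheet to the other necessarily pass through $p_0$. For the Sobolev-to-Lipschitz property \eqref{eq:Lip3}, invoke Proposition \ref{prop:Lip} and check the segment inequality \eqref{eq:segment}: it holds along each smooth sheet from the standard Riemannian segment inequality, and at $p_0$ an integration along geodesics crossing $p_0$ yields the required bound with constant at most a fixed multiple of the sphere constant.

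To rule out CD$(K,\infty)$, I would exhibit absolutely continuous probability measures $\mu_0,\mu_1$ supported in the two sheets respectively whose optimal transport must funnel through the branching point $p_0$. Using the rotational symmetry of each sphere one can write down two distinct optimal transference plans from $\mu_0$ to $\mu_1$, so Theorem \ref{thm:RS} immediately rules out strong CD$(K,\infty)$ for every $K \in \mathbb R$. To upgrade this to the negation of plain CD$(K,\infty)$, I would construct a Wasserstein geodesic $(\mu_t)_{t\in[0,1]}$ along which a positive fraction of the mass concentrates at $p_0$ at a single time $t_* \in (0,1)$; then $\mu_{t_*}$ has an atom at $p_0$, forcing $\mathrm{Ent}_m(\mu_{t_*}) = +\infty$, while $\mathrm{Ent}_m(\mu_0)$ and $\mathrm{Ent}_m(\mu_1)$ are finite, which contradicts $K$-convexity of the entropy for every $K \in \mathbb R$.

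The main obstacle is this last upgrade: Theorem \ref{thm:RS} only rules out the strong version of CD$(K,\infty)$, and a direct entropy computation is unavoidable to eliminate the plain version. The hard part is ensuring that \emph{some} Wasserstein geodesic between the chosen $\mu_0,\mu_1$ actually develops a singular atom at $p_0$, rather than merely splitting mass continuously across the branching point. This requires a careful description of the set of optimal transference plans on the glued geometry and of the fraction of mass forced to sit at $p_0$ at intermediate times. Both the non-uniqueness and the entropy arguments rest on the same underlying branching phenomenon at $p_0$, but only the latter is sharp enough to yield the conclusion stated in the proposition.
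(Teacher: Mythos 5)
Your proposal correctly identifies the crucial subtlety, which is that Theorem~\ref{thm:RS} only applies to \emph{strong} CD$(K,\infty)$-spaces, so non-uniqueness of optimal transference plans does not directly refute plain CD$(K,\infty)$. But the bridge you propose across this gap does not go through, and you are also missing the mechanism the paper actually uses.

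The paper's example is the one-dimensional Lott--Villani space $X = A \cup B \cup C$ (two segments attached to antipodal points of a circle) with $H^1$. The verifications of doubling, Poincar\'e (via Heinonen--Koskela gluing), and Sobolev-to-Lipschitz (via Proposition~\ref{prop:Lip}(3)$\Rightarrow$(2), checked directly for Lipschitz functions) are broadly in the same spirit as yours. The essential difference is how CD$(K,\infty)$ is excluded. The paper observes that this $X$ is locally CAT$(0)$, and invokes the result of Di~Marino--Gigli--Pasqualetto--Soultanis (\cite{MGPS}) that locally CAT$(\kappa)$-spaces are infinitesimally Hilbertian. Thus if $X$ were CD$(K,\infty)$, it would automatically be RCD$(K,\infty)$, hence \emph{strong} CD$(K,\infty)$, and then Theorem~\ref{thm:RS} plus the well-known non-uniqueness of optimal plans in \cite[Example 2.9]{LV} (one can transport around the circle in two ways) gives a contradiction. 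This completely sidesteps the need for any entropy estimate.

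Your proposed entropy argument has a genuine gap that you yourself flag as ``the hard part.'' For a wedge $\mathbb{S}^2 \vee \mathbb{S}^2$, the geodesics joining points of one sheet to points of the other pass through $p_0$ at times that depend on the pair of endpoints; for absolutely continuous $\mu_0, \mu_1$ with densities spread over open sets, the set of geodesics crossing $p_0$ at any \emph{fixed} time $t_*$ is generically a null set of the transference plan, so $\mu_{t_*}$ need not have an atom. To force an atom you would want all geodesics to have the same ratio $d(x,p_0)/d(x,p_0)+d(p_0,y)$, which pins $\mu_0$ and $\mu_1$ to geodesic spheres around $p_0$ --- but those are not absolutely continuous, so $\mathrm{Ent}_m(\mu_0)$ is already $+\infty$ and the convexity argument collapses. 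Finally, note that the paper's fix does not transplant to your example: $\mathbb{S}^2$ has positive curvature, so $\mathbb{S}^2 \vee \mathbb{S}^2$ is not locally CAT$(0)$, and infinitesimal Hilbertianity via \cite{MGPS} is unavailable. Similarly, a tripod or a figure-eight must be chosen with care: a tripod typically has \emph{unique} optimal plans between measures on two of its legs, so the cycle in the Lott--Villani example is doing real work.
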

The desired space $X$ in Theorem \ref{prop:example} is the space considered in \cite[Example 2.9]{LV}. 
Let us explain this space. 
Let $A, B$ and $C$ be given as subsets of the plane as 
\begin{align*}
A &:= \{(x_1,0) \mid -2 \le x_1 \le -1 \}, \\
B &:= \{(x_1,x_2) \mid x_1^2+x_2^2 = 1 \}, \\
C &:= \{(x_1,0) \mid 1 \le x_1 \le 2 \}. 
\end{align*}
Then, $X$ is realized as the union of $A,B$ and $C$. 
Here, the distance function is given as intrinsic one. 
We consider a measure on $X$ which is the standard one-dimensional Hausdorff measure $H^1$. 

\begin{proof}
Let $X=A \cup B \cup C$ be as above. 
Let $v_- := (-1,0)$ and $v_+ := (1,0)$. 
It can be directly checked that $X$ is doubling (or by a general result in \cite{Pau}). 
Furthermore, using \cite[Theorem 6.15]{HeiKo} twice, $X$ is known to support the Poincar\'e inequality.

We consider $Y = A \cup B$.  
We show that $Y$ has the Sobolev-to-Lipschitz property. 
Let us take $f \in \Lip(Y)$ and consider $f|_A  \in \Lip(A)$ and $f|_B \in \Lip(B)$. 
Since both $A$ and $B$ are one-dimensional manifolds, 
\[
\| \lip(f|_D) \|_{L^\infty(D, H^1)} = \Lip(f|_D)
\]
holds for $D=A,B$. 
For $x \in A \setminus \{v_-\}$ and $y \in B \setminus \{v_-\}$, we have 
\begin{align*}
\frac{|f(x)-f(y)|}{|x,y|} &\le \frac{|f(x)-f(v_-)|+|f(y)-f(v_-)|}{|x,v_-|+|y,v_-|} \\
&\le \frac{|x,v_-|}{|x,v_-|+|y,v_-|} \frac{|f(x)-f(v_-)|}{|x,v_-|} \\
&\hspace{1.4em}+ \frac{|y,v_-|}{|x,v_-|+|y,v_-|} \frac{|f(y)-f(v_-)|}{|y,v_-|} \\
&\le \max \{ \Lip(f|_A), \Lip(f|_B)\} \\
&\le \Lip(f). 
\end{align*}
Therefore, we conclude 
\[
\max \{ \Lip(f|_A), \Lip(f|_B) \} = \Lip(f).
\]
Hence, we obtain 
\begin{align*}
\| \lip(f) \|_{L^\infty(Y,H^1)} &= \max \{\| \lip(f|_A) \|_{L^\infty(A, H^1)}, \| \lip(f|_B) \|_{L^\infty(B, H^1)} \} \\
&= \max \{ \Lip(f|_A), \Lip(f|_B) \} = \Lip(f).
\end{align*}
Therefore, by Proposition \ref{prop:Lip}, we know that $Y$ has the Sobolev-to-Lipschitz property.
Applying this argument to $(Y,C,v_+)$ instead of $(A,B,v_-)$, we conclude that $X$ has the Sobolev-to-Lipschitz property. 

We prove that $X$ is not a CD$(K,\infty)$-space for every $K \in \mathbb R$. 
Suppose that $X$ is CD$(K,\infty)$ for some $K$. 
It is trivial that $X$ is locally CAT$(0)$ (which means that $X$ has nonpositive sectional curvature in Alexandrov sense. see \cite{BBI}, \cite{MGPS} for the definition).  
So, by \cite{MGPS}, $X$ is known to be infinitesimally Hilbertian (see \cite{MGPS}, \cite{AGS1} for the definition). 
Therefore, 
$X$ is an RCD$(K,\infty)$-space. 
In particular, $X$ is a strong CD$(K,\infty)$-space. 
Let us consider two measures $\mu_0, \mu_1 \in P(X)$ defined as the uniform measures on $A$ and $C$, respectively. 
Then, as mentioned in \cite[Example 2.9]{LV}, there are uncountably many Wasserstein geodesics from $\mu_0$ to $\mu_1$. 
This contradicts to the uniqueness of optimal transference plan (Theorem \ref{thm:RS}). 
This completes the proof.
\end{proof}

%
%
\section{A note on asymptotic law for packing radii} \label{sec:Weyl}
Let $M$ be a closed $n$-dimensional Riemannian manifold. 
Due to Gromov \cite[\S 2]{Grom}, the asymptotic packing equality 
\begin{equation} \label{eq:Gromov}
\lim_{k \to \infty} \frac{ k\, \pack_k(M)^n}{\mathrm{vol}_g(M)} = \circledcirc_n 
\end{equation}
holds, where $\circledcirc_n$ is a universal constant independent on $M$, that is the Euclidean packing constant. 
Moreover, $\omega_n \circledcirc_n$ is the optimal density of sphere packings of Euclidean space $\mathbb R^n$, where $\omega_n$ is the volume of the unit ball in $\mathbb R^n$. 

By \eqref{eq:Gromov}, we immediately obtain: 
\begin{proposition} \label{prop:pack-law}
For $M$ as above, we have 
\[
\lim_{r \to 0} r^n \# \{ k \ge 1 \mid \mathrm{pack}_{k+1}(M) > r \} = \circledcirc_n \mathrm{vol}_g(M). 
\]
\end{proposition}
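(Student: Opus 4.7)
The plan is a standard conversion of a monotone sequence asymptotic into an asymptotic for its associated counting function, parallel to passing from a Weyl-type eigenvalue bound $\lambda_k \sim c k^{2/n}$ to the corresponding asymptotic for the counting function $N(\lambda)$. Set $b_k := \pack_{k+1}(M)$ and $C := \circledcirc_n \mathrm{vol}_g(M)$. Since $\{\pack_k(M)\}_k$ is non-increasing and tends to zero, so is $\{b_k\}_k$, and the counting function $N(r) := \#\{k \ge 1 \mid b_k > r\}$ is finite for each $r > 0$, non-increasing in $r$, and satisfies $N(r) \to \infty$ as $r \to 0^+$.

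First I would restate \eqref{eq:Gromov} in terms of $b_k$: since $(k+1)\, b_k^n / \mathrm{vol}_g(M) \to \circledcirc_n$ and $k/(k+1) \to 1$, one has $\lim_{k\to\infty} k\, b_k^n = C$. By the definition of $N(r)$ together with the monotonicity of $\{b_k\}$, the integer $N(r)$ is characterized by the sandwich
\[
b_{N(r)} > r \ge b_{N(r)+1}.
\]
Raising to the $n$th power and multiplying by $N(r)$ yields
\[
N(r)\, b_{N(r)+1}^n \le N(r)\, r^n < N(r)\, b_{N(r)}^n.
\]

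The upper bound $N(r)\, b_{N(r)}^n$ tends to $C$ by the sequence asymptotic (using $N(r) \to \infty$), while the lower bound rewrites as $\frac{N(r)}{N(r)+1}\cdot (N(r)+1)\, b_{N(r)+1}^n$, which likewise tends to $1 \cdot C = C$. A routine squeeze argument therefore gives $\lim_{r \to 0^+} r^n N(r) = C$, as required. No serious obstacle is expected here: the proposition is stated as an immediate consequence of Gromov's equality, and the only bookkeeping detail is the harmless index shift between $b_k = \pack_{k+1}(M)$ and $\pack_k(M)$, absorbed by the factor $N(r)/(N(r)+1) \to 1$.
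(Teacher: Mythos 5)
Your proof is correct and follows essentially the same route as the paper: the paper invokes the well-known conversion between the asymptotic $k\,a_k \to C$ for a non-increasing positive null sequence and the asymptotic $r\,N(r) \to C$ for its counting function, applied to $a_k = \mathrm{pack}_k(M)^n$, and your sandwich $b_{N(r)} > r \ge b_{N(r)+1}$ together with the squeeze is precisely the standard proof of that fact, with the harmless index shift $b_k = \mathrm{pack}_{k+1}(M)$ handled explicitly.
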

\begin{proof}
Let us recall the following well-known fact. 
Let $\{a_k\}_{k =1}^\infty$ be a monotone non-increasing sequence of positive numbers converging to zero. 
Suppose that $\{ k a_k\}_k$ has the limit as $k \to \infty$. 
Define a counting function as 
\[
N(r) := \# \{k \mid a_k > r\} = \max \{k \mid a_k > r\} = \min \{k \mid a_k \le r \}-1.
\]
Then, $\{r N(r)\}_{r > 0}$ has the limit as $r \to 0$ and 
\[
\lim_{r \to 0} r N(r) = \lim_{k \to \infty} k a_k.
\]
Applying this fact to $a_k = \pack_k(M)^n$, we obtain the conclusion. 
\end{proof}

Let $\lambda_{k,p}(M)$ denote a $k$-th eigenvalue of the $p$-Laplacian considered in Section \ref{sec:real vs fake}. 
Recently, in \cite{Maz}, Mazurowski generalized the classical Weyl's asymptotic law for $\{\lambda_{k,p}\}_k$: 
\begin{theorem}[\cite{Maz}] \label{thm:Maz-law}
There exists a universal constant $c_n(p)$ depending only on $n$ and $p$ such that 
\begin{equation} \label{eq:Maz-law}
\lim_{\xi \to \infty} \frac{ \# \{k \ge 1 \mid \lambda_{k,p}(M)^{1/p} < \xi \}}{\xi^n} = c_n(p) \mathrm{vol}_g(M)
\end{equation}
holds. 
\end{theorem}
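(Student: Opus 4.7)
The plan is to establish the asymptotic law by Dirichlet--Neumann bracketing, reducing the problem on $M$ to a universal scaling analysis on Euclidean cubes.

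First, I would define the constant $c_n(p)$ intrinsically. For the unit cube $Q = [0,1]^n \subset \mathbb R^n$ with Lebesgue measure, the analogous Perera-type Dirichlet spectrum $\{\lambda_{k,p}^D(Q)\}_k$ is an unbounded sequence of eigenvalues characterized by a min-max over an appropriate topological index. Using the scaling identity $\lambda_{k,p}^D(rQ) = r^{-p}\lambda_{k,p}^D(Q)$ together with sub/superadditivity of the counting function under partitions of $Q$ into smaller disjoint cubes, one derives that the limit
\[
c_n(p) := \lim_{\xi\to\infty}\xi^{-n}\#\{k\ge 1 : \lambda_{k,p}^D(Q)^{1/p} < \xi\}
\]
exists and is finite and positive. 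This parallels the Polya--Weyl arguments for the linear Laplacian, but uses the nonlinear variational framework via index theory rather than orthogonal projections.

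Next, for a closed Riemannian $n$-manifold $M$, I would fix a small scale $r>0$ and partition $M$ into roughly $\mathrm{vol}_g(M)/r^n$ almost-Euclidean pieces $\{E_\alpha\}$ using quasi-uniform geodesic charts. On each piece, sectional-curvature bounds produce a bi-Lipschitz identification with $rQ$ whose distortion tends to $1$ as $r\to 0$; hence the Dirichlet eigenvalue counts on $E_\alpha$ agree with those on $rQ$ up to a factor $1+o(1)$, which by scaling equals $c_n(p)\,r^n\xi^n(1+o(1))$ below threshold $\xi$.

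The core step is then the bracketing. For the lower bound on $N_M(\xi):=\#\{k:\lambda_{k,p}(M)^{1/p}<\xi\}$, Dirichlet test functions supported in the disjoint $E_\alpha$ assemble into high-index subsets of $W^{1,p}(M)$ --- compare the proof of Proposition \ref{prop:comparision}, where disjointness of supports produced linearly independent families --- yielding
\[
N_M(\xi)\ \ge\ \sum_\alpha \#\{k:\lambda_{k,p}^D(E_\alpha)^{1/p}<\xi\}\ \approx\ c_n(p)\,\mathrm{vol}_g(M)\,\xi^n.
\]
For the matching upper bound one needs a Neumann-type bracketing, which is the main obstacle: since $\triangle_p$ is nonlinear, no spectral projection argument is available, and one must instead reason variationally --- for example, by showing that the topological index of any sublevel set of the Rayleigh quotient on $M$ is bounded by the sum of indices coming from the pieces, using restriction to the $E_\alpha$ and a careful treatment of the thin boundary layer around $\bigcup_\alpha\partial E_\alpha$. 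Once both directions are in hand, first letting $\xi\to\infty$ and then $r\to 0$ yields $N_M(\xi)/\xi^n\to c_n(p)\,\mathrm{vol}_g(M)$, which is the desired law.
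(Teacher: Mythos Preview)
The paper does not prove this theorem: Theorem~\ref{thm:Maz-law} is merely quoted from Mazurowski's preprint \cite{Maz} and is used only to motivate Conjecture~\ref{conj:c(p)}. There is therefore no ``paper's own proof'' to compare your proposal against.

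As for the sketch itself, the Dirichlet--Neumann bracketing strategy you outline is the natural approach and is essentially what \cite{Maz} carries out. Your identification of the main obstacle is accurate: the upper bound cannot be obtained by the classical linear argument, and one must work variationally with the cohomological (or Krasnoselskii genus / $\mathbb{Z}_2$-index) structure that defines $\{\lambda_{k,p}\}_k$. You have not explained how to actually execute that step --- in particular, how restriction to the pieces $E_\alpha$ interacts with the index of a symmetric set in $W^{1,p}(M)$, and how to ensure the boundary layer contributes only a lower-order error to the count. These are precisely the technical points that require work in \cite{Maz}, so as written your proposal is a plausible outline rather than a proof.
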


Concerning with Theorem \ref{thm:Maz-law} and Proposition \ref{prop:pack-law}, we formulate the following conjecture:
\begin{conjecture} \label{conj:c(p)}
$c_n(p)$ in Theorem \ref{thm:Maz-law} is continuous in $p$ and $\lim_{p \to \infty} c_n(p) = \circledcirc_n$.
\end{conjecture}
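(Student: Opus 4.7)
The plan is to connect Mazurowski's asymptotic law \eqref{eq:Maz-law} with Gromov's packing law (Proposition \ref{prop:pack-law}) through the min-max characterization of Theorem \ref{thm:1}. Setting $\xi = r^{-1}$ in \eqref{eq:Maz-law}, one sees that $c_n(p) \to \circledcirc_n$ should hold in the limit $p \to \infty$ provided one can justify exchanging the limits $p \to \infty$ and $r \to 0$ in the two counting functions
\[
r^n \, \# \{ k \ge 1 \mid \lambda_{k,p}(M)^{1/p} < r^{-1} \} \quad \text{vs.} \quad r^n \, \# \{ k \ge 1 \mid \pack_{k+1}(M) > r \}.
\]
The heuristic link is $\lambda_{k,p}(M)^{1/p} \to \pack_{k+1}(M)^{-1}$ for each fixed $k$, which Theorem \ref{thm:1} already establishes for the fake spectrum $\overline \lambda_{k,p}$.

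Step 1 is to promote Theorem \ref{thm:1} to the true spectra $\lambda_{k,p}^{\pm}$ and $\lambda_{k,p}$. The $\limsup$ side is immediate from $\lambda_{k,p}^+ \le \overline \lambda_{k,p}$ (Proposition \ref{prop:comparision}). The $\liminf$ side is more delicate: one would need to show that any admissible family in the Krasnoselskii (or Perera) min-max scheme produces functions whose $L^p$-Rayleigh quotients, raised to $1/p$, cannot asymptotically fall below $\pack_{k+1}(M)^{-1}$. A compactness argument in the spirit of the proof of Theorem \ref{thm:1} should work: take an almost-minimizing family, pass to $W^{1,q}$-weak and uniform Hölder limits via \eqref{eq:uniform Holder}, then use the Krasnoselskii genus of the limit set to extract $k+1$ ``essentially disjoint'' Lipschitz limit functions to which Lemma \ref{lem:inrad vs pack} can be applied. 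The continuity of $c_n(p)$ in $p$ should be addressed independently by inspecting Mazurowski's construction in \cite{Maz}: $c_n(p)$ ought to be a Euclidean variational constant depending smoothly on the exponent through $\Gamma$-convergence of the $p$-energies $\|\nabla \cdot\|_p$.

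The main obstacle is the interchange of limits. Pointwise convergence $\lambda_{k,p}(M)^{1/p} \to \pack_{k+1}(M)^{-1}$ for each fixed $k$ is not enough to deduce $c_n(p) \to \circledcirc_n$; one needs quantitative control that is uniform in $k$ (equivalently, uniform as $r \to 0$) while $p$ is large. Concretely, I would seek an inequality of the form
\[
\bigl| \lambda_{k,p}(M)^{1/p} - \pack_{k+1}(M)^{-1} \bigr| \le \epsilon(p,k)
\]
with an explicit rate depending only on $n$, $\mathrm{diam}(M)$ and the Ricci bound, decaying in $p$ uniformly for $k$ in the range $\pack_{k+1}(M) \approx r$. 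Such a quantitative refinement of Theorem \ref{thm:1} seems to require an effective Morrey inequality with tracked constants together with an effective packing-to-inradius comparison, and it is the bottleneck that prevents a direct proof; this is, I believe, precisely why the statement is left as a conjecture.
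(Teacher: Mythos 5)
This is stated in the paper as a conjecture with no proof, so there is no argument of the author's to compare yours against; what you have written is a plausibility sketch, and you correctly say as much. Your reduction of $c_n(p)\to\circledcirc_n$ to matching the two counting functions via $\xi=r^{-1}$ is the right heuristic, and you are right that the interchange of the limits $p\to\infty$ and $k\to\infty$ is a genuine obstruction. But your Step~1 — upgrading Theorem~\ref{thm:1} to the real spectra, i.e.\ proving $\lambda_{k,p}(M)^{1/p}\to\pack_{k+1}(M)^{-1}$ for each fixed $k$ — is not merely delicate, it is almost certainly false. Proposition~\ref{prop:comparision} only gives the one-sided bound $\lambda_{k,p}^{+}\le\overline\lambda_{k,p}$, hence $\limsup_p\lambda_{k,p}^{+}(M)^{1/p}\le\pack_{k+1}(M)^{-1}$, and nothing forces a matching lower bound because the min-max eigenvalues respect multiplicity while packing radii do not. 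On the circle of circumference $L$ the spectrum of $\triangle_p$ (periodic problem) is $\{(2j\pi_p/L)^p\}_{j\ge0}$ with the $j$-th level carrying a whole $S^1$ of eigenfunctions, so any genus-type min-max gives $\lambda_{2j-1,p}=\lambda_{2j,p}=(2j\pi_p/L)^p$; thus $\lambda_{2,p}^{1/p}\to 4/L$ while $\pack_3^{-1}=6/L$. So a compactness-plus-genus argument cannot prove per-$k$ convergence, because per-$k$ convergence does not hold.

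This does not sink the conjecture — in the same circle example the Weyl count is $N(\xi)\sim L\xi/\pi_p$, so $c_1(p)=1/\pi_p\to1/2=\circledcirc_1$, i.e.\ the leading-order asymptotics agree even though the per-$k$ quantities do not — but it means your strategy must be replaced by a comparison of the counting functions themselves, for instance by showing that the defect $\#\{k:\lambda_{k,p}(M)^{1/p}<\xi\}-\#\{k:\pack_{k+1}(M)^{-1}<\xi\}$ grows sublinearly in $\xi^n$ uniformly for large $p$. That is a Dirichlet--Neumann-bracketing type statement, not a fixed-$k$ limit, and it is the part for which neither the paper nor your sketch offers a mechanism. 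Your remark that continuity of $c_n(p)$ in $p$ should be tackled separately by inspecting the Euclidean variational constant in \cite{Maz} is reasonable and independent of the above.
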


\appendix
\section{A uniform Morrey type estimate} \label{sec:Morrey}
We prove Theorem \ref{thm:HK} following \cite{HK} and \cite{HKST}.
After that, we verify \eqref{eq:uniform Holder}. 

Let $X$ be a compact metric space and $m$ is a finite Borel measure on $X$ with full support. 
We do not assume that neither $X$ is geodesic nor $m(X)=1$.
However, we suppose that $(X,m)$ is doubling \eqref{eq:doubling} 
and denote by $C_D$ a constant appeared in \eqref{eq:doubling}. 
Then, we have 
\begin{equation} \label{eq:volume comp}
\frac{m(U_r(x))}{m(U_{r'}(x'))} \le 2 C_D \left(\frac{r}{r'} \right)^s \text{ and } \frac{m(U_r(x))}{m(U_{r'}(x))} \le C_D \left(\frac{r}{r'} \right)^s
\end{equation}
for every $x \in X$, $x' \in U_r(x)$ and $0<r'<r$. 
Here, $s = \log_2 C_D$.
For a proof, see \cite[Lemma 8.1.13]{HKST}.

As in \cite[p.25]{HK}, 
for $\sigma \ge 1$, $p >0$, an open set $\Omega$ and a Borel function $h : X \to \mathbb R$, we define a generalized Riesz potential by 
\[
J_{p}^{\sigma,\Omega} h(x) := \sum_{i \in \mathbb Z; 2^i \le 2 \sigma \mathrm{diam}(\Omega)} 2^i \left( \dashint_{B_i(x)} |h|^{p}\,dm \right)^{1/p}, 
\]
where, $B_i(x) := U_{2^i}(x)$. 
About this operation, the following is known in \cite{HK}, but we give a proof, because we want to know an explicit bound of constants appeared there. 
\begin{theorem}[{\cite[Theorems 5.2, 5.3]{HK}}] \label{thm:HK2}
Let $(X,m)$ be as above. 
Let $(f,g)$ be a pair supporting the $p$-Poincar\'e inequality in the sense that 
\[
\dashint_{U_r(x)} \left| f - \dashint_{U_r(x)} f\,dm \right|\,dm \le C_P \left( \dashint_{U_{\sigma r}(x)} |g|^{p}\,dm \right)^{1/p}
\]
holds for every $x \in X$ and $r > 0$, and let $h \in L^p(X)$. 
Here, $C_P > 0, p > 0$ and $\sigma \ge 1$ be constants. 
Then, the following holds. 
\begin{enumerate}
\item For $x \in X$, $r > 0$, we have 
\[
\left| f(y) - \dashint_{U_r(x)} f\, dm \right| \le C (J_{p}^{\sigma, U_r(x)} g(y))
\]
for almost everywhere in $U_r(x)$, where 
\[
C = (1+C_D)C_DC_P \sigma^{-1}.
\]
\item 
If $p>s$, then for every $x \in X$ and $r > 0$, we have 
\[
\| J_{p}^{\sigma, U_r(x)} h\|_{L^\infty(U_r(x))}
\le C' r \left( \dashint_{U_{9\sigma r}(x)} |h|^{p}\,dm \right)^{1/p}, 
\]
where 
\[
C' = 2^{4+1/p} 3^{-1+ 2s/p} C_D^{1/p} \sigma^{1+s/p} 
\]
\item 
If $p>s$ and $g \in L^{p}(X)$, then 
$f$ has a $(1-(s/p))$-H\"older continuous representative. 
Furthermore, after taking a continuous representative of $f$, for $x, y \in X$, we have 
\[
|f(x)-f(y)| \le C'' |x,y|^{1-(s/p)} \mathrm{diam}(X)^{s/p} m(X)^{-1/p} \|g\|_{p}, 
\] 
where 
\begin{align*}
C'' &= 4 \cdot 3^{-s/p} CC'C_D^{2/p} \sigma^{-s/p}  \\
&= 2^{6+1/p} 3^{1-s/p} C_D^{1+3/p} (1+C_D) C_P.
\end{align*}
\end{enumerate}
\end{theorem}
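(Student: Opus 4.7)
The plan is to follow the Haj\l asz--Koskela method from \cite{HK,HKST}, keeping careful track of every doubling and enlargement factor so that the explicit constants stated in parts (2) and (3) can be read off the argument.

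For part (1) I would run the standard telescoping chain argument. Fix a Lebesgue point $y \in U_r(x)$ and set $B_i(y) := U_{2^i}(y)$; for the largest index $i_0$ with $2^{i_0} \le 2\sigma\,\mathrm{diam}(U_r(x))$, the difference $f(y) - \dashint_{U_r(x)} f\,dm$ is written as the telescoping sum $\sum_i (f_{B_i(y)} - f_{B_{i+1}(y)})$ together with a single boundary term comparing $f_{B_{i_0}(y)}$ with $f_{U_r(x)}$. Each jump is estimated by passing to the larger ball via doubling, $|f_{B_i(y)} - f_{B_{i+1}(y)}| \le C_D \dashint_{B_{i+1}(y)} |f - f_{B_{i+1}(y)}|\,dm$, and then applying the Poincar\'e hypothesis at scale $2^{i+1}$; summing reproduces the Riesz potential. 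The factor $(1+C_D)C_D C_P \sigma^{-1}$ tracks one doubling step, one enlargement of $U_r(x)$ to the bounding ball, and the Poincar\'e constant.

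For part (2) the hypothesis $p > s$ is what provides summability. For $y \in U_r(x)$, every $B_i(y)$ appearing in $J_p^{\sigma,U_r(x)} h(y)$ lies inside $U_{9\sigma r}(x)$ by the triangle inequality together with the enlargement $\sigma$ (the margin $9$ is what remains after the bookkeeping $2^i \le 4\sigma r$ and the additional enlargement that was absorbed in (1)). Bounding $\int_{B_i(y)} |h|^p$ by $\int_{U_{9\sigma r}(x)} |h|^p$ and converting with \eqref{eq:volume comp} gives
\[
\Bigl( \dashint_{B_i(y)} |h|^p\,dm \Bigr)^{1/p} \le (2 C_D)^{1/p} \Bigl( \tfrac{9\sigma r}{2^i} \Bigr)^{s/p} \Bigl( \dashint_{U_{9\sigma r}(x)} |h|^p\,dm \Bigr)^{1/p}.
\]
Multiplying by $2^i$ turns the scale dependence into $2^{i(1-s/p)}$, and since $p > s$ the sum over $i$ is a geometric series with ratio less than $1$ whose value is absorbed into $C'$, together with the factor $r$ and the numerical constants.

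Finally, for part (3) I combine (1) and (2) through a two-center chain. Given distinct $x, y \in X$ put $r = |x,y|$ and, when $r$ is small compared to $\mathrm{diam}(X)$, choose $B_x = U_r(x)$, $B_y = U_r(y)$. Parts (1) and (2) give $|f(z) - f_{B_z}| \lesssim r \bigl( \dashint_{U_{9\sigma r}(z)} |g|^p\,dm \bigr)^{1/p}$ for $z \in \{x,y\}$, and the right-hand integral is converted into the global $L^p$-norm of $g$ by the second half of \eqref{eq:volume comp}; this is exactly where the factor $\mathrm{diam}(X)^{s/p} m(X)^{-1/p}$ appears and the scale $r$ is reduced to $r^{1-s/p}$. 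One more Poincar\'e application on $U_{2r}(x) \supseteq B_x \cup B_y$ controls the cross term $|f_{B_x} - f_{B_y}|$ by the same quantity; the regime $r \gtrsim \mathrm{diam}(X)$ is handled identically with $r$ replaced by $\mathrm{diam}(X)$ and absorbed into the constant, and continuity of the representative follows because the right-hand side tends to zero as $|x,y| \to 0$. The main obstacle is purely bookkeeping --- matching the combinatorial factors of $2$, $3$, and $9\sigma$ that accumulate at each enlargement and doubling step so that the final $C''$ reduces to the stated closed form.
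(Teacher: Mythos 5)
Your parts (1) and (2) follow the same Haj\l asz--Koskela telescoping-chain and geometric-series argument as the paper, including the observation that $B_{i_0}(y) \subset U_{9\sigma r}(x)$ and the use of \eqref{eq:volume comp} to trade the local average for the one over $U_{9\sigma r}(x)$; this matches the paper's proof.

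For part (3) you take a genuinely different chaining scheme than the paper, and this is worth flagging. You choose two balls $B_x = U_r(x)$, $B_y = U_r(y)$ of radius $r = |x,y|$ and therefore need an additional Poincar\'e step to control the cross term $|f_{B_x} - f_{B_y}|$. The paper instead fixes a countable dense set $D$ and rational-radius balls $\mathcal B = \{E_i\}$, and for given $x,y$ selects a \emph{single} ball $E_i = U_{r_i}(x_i)$ with $|x,x_i| < r/2$ and $3r/2 < r_i < 2r$ so that $x,y \in E_i$; then $|f(x)-f(y)| \le |f(x)-f_{E_i}| + |f(y)-f_{E_i}|$ is handled by two applications of (1)+(2) with no cross term. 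The single-ball choice is not merely cosmetic: it is what lets the paper read off the closed-form $C'' = 4\cdot 3^{-s/p}\,C\,C'\,C_D^{2/p}\,\sigma^{-s/p}$, since the only remaining estimate is one more use of \eqref{eq:volume comp} to pass from $\dashint_{U_{9\sigma r_i}(x_i)} g^p$ to $m(X)^{-1}\|g\|_p^p$ with factor $(\mathrm{diam}(X)/r)^s$. Your two-ball scheme would produce a different (and somewhat larger) constant, so while the $(1-s/p)$-H\"older exponent and the qualitative bound survive, the stated numerical value of $C''$ would not. Since the whole point of Theorem \ref{thm:HK2} in this paper is the uniform-in-$p$ bound $\sup_{p > \max\{p_0,s\}} C(p) < \infty$ of Theorem \ref{thm:HK}, and that uniformity also holds with your larger constant, the result you would obtain still suffices for the application --- but one should not claim the particular $C''$ in the statement.

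There is also a small gap in how you pass from the almost-everywhere inequality to an actual H\"older representative. Parts (1) and (2) give the H\"older estimate only at Lebesgue points, so one must exhibit a single full-measure set $A$ on which the two-point estimate holds \emph{for all pairs} simultaneously. The paper does this explicitly: for each $E_i$ in the countable family it extracts $A_i \subset E_i$ of full measure in $E_i$ from part (1), sets $A = X \setminus \bigcup_i (E_i \setminus A_i)$, and then the estimate holds for every pair $x,y \in A$; since $m$ has full support, $A$ is dense and the H\"older function on $A$ extends uniquely to $X$, giving the continuous representative. Your remark that ``continuity of the representative follows because the right-hand side tends to zero'' conflates the existence of the representative with its modulus of continuity; you need the countable-ball/full-measure-set device (or an equivalent) to make the representative exist in the first place.
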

\begin{proof}
Let $y \in U_r(x)$ be a Lebesgue point of $f$ and $i_0$ the least integer with $2^{i_0} \ge 2 \sigma \mathrm{diam}(U_r(x))$. 
Then, as in the proof of \cite[Theorem 5.2]{HK}, we have 
\begin{align*}
\left| f(y) - \dashint_{U_r(x)} f\, dm \right| 
&\le C_D (1+2^s) C_P \sum_{i=-\infty}^{i_0} 2^i \sigma^{-1} \left( \dashint_{B_i(x)} g^{p} \,dm\right)^{1/p} \\
&\le C_D (1+2^s)C_P \sigma^{-1} (J_{p}^{\sigma, U_r(x)} g)(y). 
\end{align*}
This completes the proof of the first statement.

Let us assume $p > s$. 
Let $x \in X$, $r > 0$ and $y \in U_r(x)$ be fixed. 
Let $i_0$ be the least integer such that $2^{i_0} \ge 2 \sigma \mathrm{diam}(U_r(x))$. 
Then, we have
\[
B_{i_0}(y) \subset U_{9 \sigma r} (x). 
\]
Using \eqref{eq:volume comp}, 
we have 
\begin{align*}
J_{p}^{\sigma, U_r(x)} h(y) &= \sum_{2^{i} \le 2 \sigma \mathrm{diam}(U_r(x))} 2^{i} \left( \dashint_{B_i(y)} |h|^{p}\,dm \right)^{1/p} \\
&\le 
\sum_{i=-\infty}^{i_0} 2^{i} \left( \frac{m(U_{9 \sigma r}(x))}{m(B_i(y))} \right)^{1/p} \left( \dashint_{U_{9 \sigma r}(x)} |h|^p\, dm \right)^{1/p} \\
&\le 
2^{1/p} 9^{s/p} C_D^{1/p} \sigma^{s/p} \sum_{i=-\infty}^{i_0} 2^{i(1-(s/p))} r^{s/p} \left( \dashint_{U_{9 \sigma r}(x)} |h|^p\, dm \right)^{1/p}.
\end{align*}
Here, we remark that
\begin{align*}
\sum_{i=-\infty}^{i_0} 2^{i(1-(s/p))} &= 
\frac{(2^{1-s/p})^{i_0}}{1-2^{-1+s/p}} = 
(2^{i_0-1})^{1-s/p} \frac{2^{1-s/p}}{1-2^{-1+s/p}}\\
&\le 
(4 \sigma r)^{1-s/p} \frac{2^{1-s/p}}{1-2^{-1+s/p}} \le 
\frac{16 \sigma}{3} r^{1-s/p}.
\end{align*}
Therefore, we obtain (2). 

We prove (3). 
Let $(f,g)$ be as in the assumption. 
Suppose $p>s$ and $g \in L^p(X)$. 
Let $D$ be a countable dense set in $X$ and $\mathcal B$ a family of open balls of rational radii centered at points of $D$, that is, $\mathcal B = \{U_{r}(x)\}_{x \in D, r \in \mathbb Q_{>0}}$. 
Let us denote $\mathcal B$ by $\mathcal B = \{E_i\}_{i=1}^\infty$ and $E_i$ by $E_i = U_{r_i}(x_i)$. 
Note that $\bigcup_{i=1}^\infty E_i =X$. 
By (1), there exists $A_i \subset E_i$ with $m(E_i \setminus A_i) = 0$ such that 
\[
\left| f(x) - \dashint_{E_i} f\,dm \right| \le C (J_p^{\sigma, E_i} g)(x)
\]
holds for every $x \in A_i$. 
Let us set 
\[
A := X \setminus \bigcup_{i=1}^\infty (E_i \setminus A_i). 
\]
Then, we have $m(X \setminus A) = 0$. 
For $x, y \in A$, we set $r := |x,y|$ and take $i$ with $|x,x_i|<r/2$ and $3r/2<r_i<2r$.
Then, $x, y \in E_i$ and hence, $x, y \in A_i$. 
Therefore, we have 
\begin{align*}
|f(x)-f(y)| &\le \left| f(x) - \dashint_{E_i} f\,dm \right| + \left| f(y) - \dashint_{E_i} f\,dm \right| \\
&\le C (J_p^{\sigma, E_i} g(x) + J_p^{\sigma, E_i} g(y)) \\
&\le 2 CC' r_i \left( \dashint_{U_{9\sigma r_i}(x_i)} g^p\,dm\right)^{1/p}\\
&\le 4CC' r \left( \dashint_{U_{9\sigma r_i}(x_i)} g^p\,dm\right)^{1/p}. 
\end{align*}
Here, we estimate the last factor. We denote $U_a(x_i)$ by $U_a$ in the following.
\begin{align*}
\dashint_{U_{9\sigma r_i}(x_i)} g^p\,dm &\le \frac{m(U_{18 \sigma r})}{m(U_{3 \sigma r})} \dashint_{U_{18 \sigma r}} g^p\,dm \\
&\le C_D 6^s \frac{1}{m(U_{18 \sigma r})} \|g\|_p^p \\
&\le C_D^2 6^s \left( \frac{\mathrm{diam}(X)}{18\sigma r} \right)^s  \frac{1}{m(X)} \|g\|_p^p. 
\end{align*}
Hence, we obtain the conclusion of (3).
\end{proof}

\begin{proof}[Proof of Theorem \ref{thm:HK}]
Let $(X,m)$ be as in Theorem \ref{thm:HK}. 
Let $C_D$, $p_0$, $C_P$ and $\sigma$ be constant appeared in \eqref{eq:doubling} and \eqref{eq:Poincare}. 
If $p \ge p_0$, then $W^{1,p} \subset W^{1,p_0}$. 
Moreover, by the H\"older inequality, for $f \in W^{1,p}(X,m)$, the $p$-Poincar\'e inequality holds in the sense that 
\[
\dashint_{U_r(x)} \left| f - \dashint_{U_r(x)} \,dm \right|\, dm \le C_P r \left( \dashint_{U_{\sigma r}(x)} |Df|_\ast^p\,dm \right)^{1/p}
\]
for arbitrary $x \in X$ and $r > 0$, 
where $C_P$ and $\sigma$ are the same constants as those of \eqref{eq:Poincare}. 
Therefore, if $p > s = \log_2 C_D$, by Theorem \ref{thm:HK2} and by $m(X)=1$, $f$ is $(1-s/p)$-H\"older continuous and 
\[
\frac{|f(x)-f(y)|}{|x,y|^{1-s/p}} \le C'' \mathrm{diam}(X)^{s/p}\| |Df|_\ast\|_p
\]
holds, where $C''$ is the same as that in Theorem \ref{thm:HK2} (3). 
So, the desired constant $C(p) = C'' \mathrm{diam}(X)^{s/p}$ is uniformly bounded whenever $p > \max\{ s, p_0 \}$. 
This completes the proof. 
\end{proof}

We now verify \eqref{eq:Morrey} with uniform estimate \eqref{eq:uniform Holder} in the case that $M$ is a closed Riemannian manifold. 
Let $\kappa$ be a lower bound of the Ricci curvature of $M$ and $n=\dim M$. 
Due to Bishop-Gromov inequality, $(M, \mathrm{vol}_g)$ is doubling.  
Indeed, 
\[
\mathrm{vol}_g(U_{2r}(x)) \le 2^n \exp(\sqrt{-(n-1) \kappa} r) \mathrm{vol}_g(U_{r}(x))
\]
holds for every $x \in M$ and $r > 0$. 
So, the doubling constant $C_D$ as in \eqref{eq:doubling} is given by a constant depending only on $n, \kappa$ and $d = \mathrm{diam}(M)$. 
We may assume that $\kappa \ge 0$. 
Then, by Buser's inequality (\cite{Bu}), we know that 
\[
\int_{U_r(x)} \left| f - \dashint_{U_r(x)} f\,d \mathrm{vol}_g \right|\,d \mathrm{vol}_g \le C(n) \exp(\sqrt{-\kappa} r)r \int_{U_r(x)} |\nabla f|\,d \mathrm{vol}_g
\]
holds for every $x \in M$ and $r > 0$, where $C(n)$ is a constant depending only on $n$.
So, the $1$-Poincar\'e inequality holds in the above sense, and the constant $C_P$ is given by a constant depending only on $\kappa, n$ and $d$. 
Therefore, by Theorem \ref{thm:HK}, we know that \eqref{eq:Morrey} is true together with a uniform estimate \eqref{eq:uniform Holder}.


\begin{thebibliography}{9999}
\bibitem[AGS0]{AGS0} L.~Ambrosio, N.~Gigli and G.~Savar\'e, Calculus and heat flow in metric measure spaces and applications to spaces with Ricci bounds from below, Inventiones Mathematicae, 195 (2014), 289--391.
\bibitem[AGS1]{AGS1} L.~Ambrosio, N.~Gigli, G.~Savar\'e, Metric measure spaces with Riemannian Ricci curvature bounded from below, Duke Math.~J. 163 (2014), 1405--1490.
\bibitem[AH]{AH} L.~Ambrosio and S.~Honda, New stability results for sequences of metric measure spaces with uniform Ricci bounds from below, Measure theory in non-smooth spaces, 1--51,
Partial Differ. Equ. Meas. Theory, De Gruyter Open, Warsaw, 2017.
\bibitem[Br]{Br} H.~Brezis. Functional analysis, Sobolev spaces and partial differential equations. Universitext. Springer, New York, 2011. xiv+599 pp. ISBN: 978-0-387-70913-0. 
\bibitem[BBI]{BBI} D.~Burago, Y.~Burago and S.~Ivanov, A course in metric geometry. Graduate Studies in Mathematics, 33. American Mathematical Society, Providence, RI, 2001. xiv+415 pp. ISBN: 0-8218-2129-6
\bibitem[Bu]{Bu} P.~Buser, A note on the isoperimetric constant. Ann. Sci. \'Ecole Norm. Sup. (4) 15 (1982), no. 2, 213--230.
\bibitem[CL]{CL} L.~Cafferelli and F.~Lin, An optimal partition problem for eigenvalues. Journal of scientific Computing, 31(1):5--18, 2007.
\bibitem[Ch]{Ch} J.~Cheeger. Differentiability of Lipschitz functions on metric measure spaces. Geom. Funct. Anal., 9 (1999), 428--517.
\bibitem[CC]{CC} J.~Cheeger and T.~H.~Colding, Lower bounds on Ricci curvature and the almost rigidity of warped products, Annals of Mathematics, Second Series, Vol. 144, No. 1 (Jul., 1996), pp. 189--237.
\bibitem[CC3]{CC3} J.~Cheeger and T.~H.~Colding, On the structure of spaces with Ricci curvature bounded below, III, J.~Differential Geom. 54 (2000), 37--74. 
\bibitem[DR]{DR} P.~Dr\'abek and S.~B.~Robinson, Resonance problems for the $p$-Laplacian. (English summary) J. Funct. Anal. 169 (1999), no. 1, 189--200.
\bibitem[EG]{EG}  L.~C.~Evans and R.~F.~Gariepy. Measure theory and fine properties of functions. Studies in Advanced Mathematics. CRC Press, Boca Raton, FL, 1992. viii+268 pp. ISBN: 0-8493-7157-0.
\bibitem[FS]{FS} K.~Funano and Y.~Sakurai, Upper bounds for higher-order Poincar\'e constants, arXiv:1907.03617.
\bibitem[Gi]{Gi} N.~Gigli. The splitting theorem in non-smooth context, preprint, arXiv:1302.5555.
\bibitem[Grom]{Grom} M.~Gromov, Morse spectra, homology measures, spaces of cycles and parametric packing problems, preprint, arXiv:1710.03616.
\bibitem[Gros]{Gros} J.-F.~Grosjean, $p$-Laplace operator and diameter of manifolds, Ann. Global Anal. Geom. 28 (2005), no. 3, 257--270.
\bibitem[GM1]{GM1} K.~Grove and S.~Markvorsen, Curvature, triameter, and beyond, Bull. Amer. Math. Soc. (N.S.) 27 (1992), no. 2, 261--265.
\bibitem[GM2]{GM2} K.~Grove and S.~Markvorsen, New extremal problems for the Riemannian recognition program via Alexandrov geometry. J.~Amer.~Math.~Soc. 8 (1995), no. 1, 1--28. 
\bibitem[Ha]{Ha} P.~Haj\l asz. Sobolev spaces on an arbitrary metric space. (English summary) Potential Anal. 5 (1996), no. 4, 403--415.
\bibitem[HK]{HK} P.~Haj\l asz and P.~Koskela, Sobolev met Poincar\'e. Mem. Amer. Math. Soc. 145 (2000), no. 688, x+101 pp. 
\bibitem[HeK]{HeiKo} J.~Heinonen, P.~Koskela, Quasiconformal maps in metric spaces with controlled geometry. Acta Math. 181 (1998), no. 1, 1--61.
\bibitem[HKST]{HKST} J.~Heinonen, P.~Koskela, N.~Shanmugalingam, and J.~T.~Tyson, Sobolev spaces on metric measure spaces. An approach based on upper gradients. New Mathematical Monographs, 27. Cambridge University Press, Cambridge, 2015. xii+434 pp. ISBN: 978-1-107-09234-1. 
\bibitem[Ho]{Ho} S.~Honda, Cheeger constant, $p$-Laplacian, and Gromov-Hausdorff convergence, preprint, arXiv:1310.0304. 
\bibitem[JLM]{JLM} P.~Juutinen, P.~Lindqvist, J.~J.~Manfredi, The $\infty$-eigenvalue problem, Arch. Ration. Mech. Anal. 148 (1999), no. 2, 89--105.
\bibitem[JL]{JL} P.~Juutinen and P.~Lindqvist, On the higher eigenvalues for the $\infty$-eigenvalue problem. Calc.~Var.~Partial Differential Equations 23 (2005), no. 2, 169--192.
\bibitem[Kra]{Kra}  M.~A.~Krasnoselskii, Topological methods in the theory of nonlinear integral equations. Translated by A. H. Armstrong; translation edited by J. Burlak. A Pergamon Press Book. The Macmillan Co., New York, 1964.
\bibitem[LV]{LV} J.~Lott, C.~Villani, Ricci curvature for metric-measure spaces via optimal transport, Ann.~of Math., 169 (2009), 903--991.
\bibitem[L]{L2} P.~Lindqvist. On the equation $\mathrm{div} (|\nabla u|^{p-2} \nabla u) + \lambda |u|^{p-2} u = 0$. Proc. Amer. Math. Soc. 109 (1990), no. 1, 157--164.
\bibitem[MGPS]{MGPS} S.~Di Marino, N.~Gigli, E.~Pasqualetto, E.~Soultanis, Infinitesimal Hilbertianity of locally CAT($\kappa$)-spaces, preprint, arXiv:1812.02086.
\bibitem[Mat]{Matei} A.-M.~Matei, First eigenvalue for the $p$-Laplace operator, Nonlinear Anal. 39 (2000), no. 8, Ser. A: Theory Methods, 1051--1068.
\bibitem[Mat2]{Matei2} A.-M.~Matei, Boundedness of the first eigenvalue of the $p$-Laplacian, Proc. Amer. Math. Soc. 133 (2005), no. 7, 2183--2192.
\bibitem[Maz]{Maz} L.~Mazurowski, A Weyl law for the $p$-Laplacian, preprint, arXiv:1910.11855.
\bibitem[Mic]{Mic} L.~Miclo, On eigenfunctions of Markov processes on trees, Probab.~Theory Relat.~Fields 142 (2008), no. 3-4, 561--594.
\bibitem[Pau]{Pau} G.~Paulik, Gluing spaces and analysis. Dissertation, Rheinische Friedrich-Wilhelms-Universit\"at Bonn, Bonn, 2005. Bonner Mathematische Schriften, 372. Universit\"at Bonn, Mathematisches Institut, Bonn, 2005. iv+103 pp.
\bibitem[P]{Perera} K.~Perera, Nontrivial critical groups in $p$-Laplacian problems via the Yang index. (English summary), Topol. Methods Nonlinear Anal. 21 (2003), no. 2, 301--309.
\bibitem[PAO]{PAO} K.~Perera, R.~P.~Agarwal and D.~O'Regan, Morse theoretic aspects of p-Laplacian type operators. Mathematical Surveys and Monographs, 161. American Mathematical Society, Providence, RI, 2010. xx+141 pp. ISBN: 978-0-8218-4968-2.
\bibitem[R]{Raj} T.~Rajala, Local Poincar\'e inequalities from stable curvature conditions on metric spaces, Calc. Var. Partial Differential Equations, 44 (2012), pp. 477--494.
\bibitem[RS]{RajS} T.~Rajala and K.-T,~Sturm, Non-branching geodesics and optimal maps in strong CD$(K,\infty)$-spaces. (English summary) Calc. Var. Partial Differential Equations 50 (2014), no. 3--4, 831--846.
\bibitem[Sha]{Sha} N.~Shanmugalingam. Newtonian spaces: an extension of Sobolev spaces to
metric measure spaces, Rev. Mat. Iberoamericana, 16 (2000), 243--279.
\bibitem[Stu]{St} K.-T.~Sturm, On the geometry of metric measure spaces, I and II. Acta Math. 196 (2006), 65--131 and 133--177.
\bibitem[V]{Veron} L.~Veron, Some existence and uniqueness results for solution of some quasilinear elliptic equations on compact Riemannian manifolds. Differential equations and its applications (Budapest, 1991), 317--352, Colloq. Math. Soc. J\'anos Bolyai, 62, North-Holland, Amsterdam, 1991.
\bibitem[Vi]{Vi} C.~Villani, Optimal transport. Old and new. Grundlehren der Mathematischen Wissenschaften [Fundamental Principles of Mathematical Sciences], 338. Springer-Verlag, Berlin, 2009. xxii+973 pp. ISBN: 978-3-540-71049-3.
\end{thebibliography}
\end{document}